\newtheorem{theorem}{Theorem}
\theoremstyle{plain}
\newtheorem{definition}{Definition}
\newtheorem{example}{Example}
\newtheorem{lemma}{Lemma}
\newtheorem{proposition}{Proposition}
\newtheorem{remark}{Remark}
\numberwithin{equation}{section}
\begin{document}
\title[Systems of quasilinear elliptic equations]{Existence, positivity and
boundedness of solutions for systems of quasilinear elliptic equations}
\subjclass[2010]{35J60, 35P30, 47J10, 35A15, 35D30}
\keywords{$p$-Laplacian, Schaefer's fixed point, Besov and Sobolev spaces,
Boundedness, Regularity.}

\begin{abstract}
This article sets forth results on the existence, positivity and boundedness
of solutions for quasilinear elliptic systems involving $p$-Laplacian and $q$-Laplacian operators. The approach combines Schaefer's fixed point,
comparison principle as well as Moser's iteration procedure.
\end{abstract}

\author{Abdelkrim Moussaoui}
\address{Abdelkrim Moussaoui\\
Biology department\\
A. Mira Bejaia University, Targa Ouzemour 06000 Bejaia, Algeria}
\email{abdelkrim.moussaoui@univ-bejaia.dz}
\author{Jean V\'{e}lin}
\address{Jean V\'{e}lin\\
D\'{e}partement de Math\'{e}matiques et Informatique, Laboratoire LAMIA,
Universit\'{e} des Antilles, Campus de Fouillole 97159 Pointe-\`{a}-Pitre,
Guadeloupe (FWI)}
\email{jean.velin@univ-antilles.fr}
\maketitle

\section{Introduction}

Let $\Omega \subset \mathbb{R}^{N}$ $\left( N\geq 2\right) $ be a bounded
domain with smooth boundary $\partial \Omega $. Given $1<p,q<N$, we consider
the quasilinear elliptic problem%
\begin{equation}
\left\{ 
\begin{array}{ll}
-\Delta _{p}u=f(x,u,v) & \text{in }\Omega \\ 
-\Delta _{q}v=g(x,u,v) & \text{in }\Omega \\ 
u=v=0 & \text{on }\partial \Omega ,%
\end{array}%
\right.  \tag{$P$}  \label{p}
\end{equation}%
where $\Delta _{p}$ and $\Delta _{q}$ stand for the $p$-Laplacian and $q$%
-Laplacian on $W_{0}^{1,p}(\Omega )$ and $W_{0}^{1,q}(\Omega ),$
respectively. The nonlinearities $f,g:\Omega \times \mathbb{R}%
^{2}\rightarrow \mathbb{R}$ in (\ref{p}) are Carath\'{e}odory functions,
that is, $f(\cdot ,s,t),$ $g(\cdot ,s,t)$ are measurable for every $(s,t)\in 
\mathbb{R}^{2}$, and $f(x,\cdot ,\cdot ),$ $g(x,\cdot ,\cdot )$ are
continuous for a.e. $x\in \Omega $.

A solution of (\ref{p}) is understood in the weak sense, which means a pair
of functions $(u,v)\in W_{0}^{1,p}\left( \Omega \right) \times
W_{0}^{1,q}\left( \Omega \right) $ such that 
\begin{equation*}
\left\{ 
\begin{array}{c}
\int_{\Omega }\left\vert \nabla u\right\vert ^{p-2}\nabla u\nabla \varphi \
dx=\int_{\Omega }f(x,u,v)\varphi \ dx \\ 
\int_{\Omega }|\nabla v|^{q-2}\nabla v\nabla \psi \ dx=\int_{\Omega
}g(x,u,v)\psi \ dx%
\end{array}%
\right.
\end{equation*}%
for all $(\varphi ,\psi )\in W_{0}^{1,p}(\Omega )\times W_{0}^{1,q}(\Omega
), $ provided the integrals in the right-hand side of the above equalities
exist.

\bigskip

Quasilinear elliptic systems have been quite intensely investigated in the
literature with various methods. Among them, in \cite{ka,df1,df2,
dtlv1,dtlv2}, the authors take advantage of the variational structure of the
problem to apply variational methods. In \cite{as,sz}, some of these methods
combined to Nehari manifolds are used. Nonvariational problems also have
been widely investigated through topological methods. Namely, we quote
Schaefer's fixed point \cite{jp}, monotonicity method \cite{ct},
Leray-Schauder degree theory \cite{cfmt, cgg, wf}, fixed point index \cite{w}%
, sub-supersolution technics \cite{ah, mps, hs} and blow-up method combined
with a suitable degree argument \cite{cfmt}. We also mention \cite{ghs, hmv,
cm, mm, ghm, t} focusing on the semilinear case of (\ref{p}), that is, when $%
p=q=2$. It is worth noting that the aforementioned works focus on the
following type growth condition%
\begin{equation*}
|sf(x,s,t)|,|tg(x,s,t)|\leq k(x)(|s|^{\gamma }+|t|^{\delta })
\end{equation*}%
where $1<\gamma \leq p,$ $1<\delta \leq q.$

In the present paper, we consider the complementary case in which $%
|sf(x,s,t)|$ and $|tg(x,s,t)|$ satisfy growth condition of type $|s|^{\gamma
}+|t|^{\delta },$ $\gamma \in (p,p^{\star }),$ $\delta \in (q,q^{\star })$,
where $p^{\star }$ and $q^{\star }$ are the Sobolev critical exponents, that
is, $p^{\star }=\frac{Np}{N-p}$ and $q^{\star }=\frac{Nq}{N-q}$. This
represents a serious difficulty to overcome, and is rarely handled in the
literature. Moreover, the difficulty is even more\textbf{\ }stressed
because, on one the hand, no structural assumption is assumed guaranteeing
that the Euler functional associated to problem (\ref{p}) is well defined
and therefore, the variational method cannot be applied. On the other hand,
the sub-supersolution method does not work for problem (\ref{p}) due to of
its noncooperative character. This means that generally the functions $%
f(x,u,\cdot )$ and $g(x,\cdot ,v)$ are not necessarily increasing whenever $%
u,v$ are fixed. It is worth pointing out that no sign condition is required
on the right-hand side nonlinearities and so large classes of quasilinear
problems involving $p$-Laplacian operator can be incorporated in (\ref{p}).

\bigskip

Throughout this paper, we assume that the nonlinear terms $f$ and $g$
satisfy the following assumptions:

\mathstrut

\begin{description}
\item[$($\textrm{H.1}$)$] For $(u,v)\in W_{0}^{1,p}(\Omega )\times
W_{0}^{1,q}(\Omega ),$ 
\begin{equation*}
x\mapsto f(x,u(x),v(x))\in L^{p_{C}^{\prime }}(\Omega ),\,\,x\mapsto
g(x,u(x),v(x))\in L^{q_{C}^{\prime }}(\Omega )
\end{equation*}%
where $p_{C}^{\prime }=\dfrac{Cp}{Cp-1},$ $q_{C}^{\prime }=\dfrac{Cq}{Cq-1}$
and%
\begin{equation}
1<C<\min \{\dfrac{p^{\star }}{p},\,\dfrac{q^{\star }}{q}\}.  \label{10}
\end{equation}
\end{description}

\mathstrut

\begin{description}
\item[$($\textrm{H.2}$)$] There exists a positive real function $k_{p,q}\in
L^{\infty }(\Omega )$ such that%
\begin{equation*}
{\left\vert sf(x,s,t)\right\vert {\wedge }\left\vert tg(x,s,t)\right\vert
\leq }k_{p,q}(x)\left( |s|^{\alpha +1}|t|^{\beta +1}\right) \vee \left(
|s|^{pC}+|t|^{qC}\right) ,
\end{equation*}%
for a.e. $x\in \Omega $ and all $s,t\in 
\mathbb{R}
,$ with%
\begin{equation}
\alpha >-1,\,\,\beta >-1,\text{ \ }\dfrac{\alpha +1}{p}+\dfrac{\beta +1}{q}%
=1.  \label{11}
\end{equation}
\end{description}

Here, for any $w_{1},w_{2}\in 
\mathbb{R}
$, we denote 
\begin{equation*}
|w_{1}|\wedge |w_{2}|:=\max \{|w_{1}|,|w_{2}|\}\text{ \ and \ }|w_{1}|\vee
|w_{2}|:=\min \{|w_{1}|,|w_{2}|\}.
\end{equation*}

\mathstrut

Our main interest in this work consists in getting solutions of system (\ref%
{p}) with additional qualitative properties. Namely, we established the
existence, positivity and boundedness of nontrivial solutions. Our first
main result deals with existence of nontrivial solutions which is stated as
follows.

\begin{theorem}
\label{T1} Under the assumptions $($\textrm{H.1}$)$ and $($\textrm{H.2}$)$
system (\ref{p}) admits at least one nontrivial solution $(u^{\ast },v^{\ast
})$ in $C^{1,\sigma }(\overline{\Omega })\times C^{1,\sigma }(\overline{%
\Omega })$ for certain $\sigma \in (0,1)$.
\end{theorem}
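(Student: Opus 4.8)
The plan is to obtain $(u^{\ast},v^{\ast})$ via Schaefer's fixed point theorem applied to a suitable solution operator, and then bootstrap its regularity. First I would freeze the right-hand side: given $(\bar u,\bar v)\in L^{pC}(\Omega)\times L^{qC}(\Omega)$ (the natural space in view of $(\mathrm{H.1})$ and \eqref{10}, since $pC<p^{\star}$ and $qC<q^{\star}$), assumption $(\mathrm{H.1})$ guarantees $f(\cdot,\bar u,\bar v)\in L^{p_C'}(\Omega)=(L^{pC}(\Omega))'$ and similarly for $g$. Hence the decoupled scalar problems $-\Delta_p u = f(x,\bar u,\bar v)$, $-\Delta_q v = g(x,\bar u,\bar v)$ with zero boundary data each admit a unique weak solution by the classical theory of the $p$-Laplacian (strict monotonicity and coercivity of $-\Delta_p$ on $W_0^{1,p}(\Omega)$, Minty--Browder). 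This defines $\mathcal{T}:(\bar u,\bar v)\mapsto(u,v)$ on $X:=L^{pC}(\Omega)\times L^{qC}(\Omega)$, and a fixed point of $\mathcal{T}$ is precisely a weak solution of \eqref{p}.

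The core of the argument is verifying the hypotheses of Schaefer's theorem for $\mathcal{T}$ on the reflexive Banach space $X$. Continuity and compactness of $\mathcal{T}$: if $(\bar u_n,\bar v_n)\to(\bar u,\bar v)$ in $X$, then along a subsequence the arguments converge a.e. and are dominated, so by the Carath\'eodory property and $(\mathrm{H.1})$ the data $f(\cdot,\bar u_n,\bar v_n)\to f(\cdot,\bar u,\bar v)$ in $L^{p_C'}(\Omega)$ (here one uses Krasnoselskii-type continuity of the Nemytskii operator together with a bound from $(\mathrm{H.2})$ to get equi-integrability); then the energy estimate $\|\nabla u_n\|_p^{p}\le \|f(\cdot,\bar u_n,\bar v_n)\|_{p_C'}\|u_n\|_{pC}$ plus the continuity of the inverse $p$-Laplacian (which is continuous from $W^{-1,p'}$ to $W_0^{1,p}$) and the compact embedding $W_0^{1,p}(\Omega)\hookrightarrow\hookrightarrow L^{pC}(\Omega)$ (compact because $pC<p^{\star}$) yield $u_n\to u$ in $L^{pC}(\Omega)$; the same for $v$. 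Boundedness of $\mathcal{T}$ on bounded sets is read off the same energy estimate together with the growth bound in $(\mathrm{H.2})$, so $\mathcal{T}$ is completely continuous. The crucial Schaefer a priori bound requires showing that the set $\{(u,v)\in X:\ (u,v)=\lambda\,\mathcal{T}(u,v)\ \text{for some}\ \lambda\in[0,1]\}$ is bounded in $X$; here one tests the (scaled) equations with $u$ and $v$ respectively, adds them, and exploits the precise structure of $(\mathrm{H.2})$ --- the term $k_{p,q}(x)|s|^{\alpha+1}|t|^{\beta+1}$ with the balance condition $\frac{\alpha+1}{p}+\frac{\beta+1}{q}=1$ in \eqref{11} is exactly what makes a Young-type inequality split the coupled product into $\varepsilon\|\nabla u\|_p^p+\varepsilon\|\nabla v\|_q^q$ plus lower-order terms, while the alternative bound $|s|^{pC}+|t|^{qC}$ is absorbed by the Sobolev embedding since $pC<p^{\star}$, $qC<q^{\star}$. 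This a priori estimate is the main obstacle: one must carefully manage the $\wedge$/$\vee$ (i.e.\ $\max$/$\min$) structure of $(\mathrm{H.2})$ and choose the Young exponents so that the homogeneities close up, yielding a uniform bound independent of $\lambda$.

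Having produced a weak solution $(u^{\ast},v^{\ast})\in W_0^{1,p}(\Omega)\times W_0^{1,q}(\Omega)$ by Schaefer's theorem, I would upgrade its regularity in two stages. First, boundedness: apply Moser's iteration to each equation using $(\mathrm{H.1})$--$(\mathrm{H.2})$ (the growth $|s|^{pC-1}$-type on the right-hand side, with $pC<p^{\star}$, is subcritical enough for the iteration to bootstrap $L^{r}$ bounds to $L^{\infty}$), concluding $u^{\ast},v^{\ast}\in L^{\infty}(\Omega)$. Then, with bounded right-hand sides $f(\cdot,u^{\ast},v^{\ast})\in L^{\infty}(\Omega)\subset W^{-1,p'}$ and likewise for $g$, invoke the interior and boundary $C^{1,\sigma}$ regularity theory for the $p$-Laplacian (DiBenedetto / Lieberman) to obtain $u^{\ast},v^{\ast}\in C^{1,\sigma}(\overline\Omega)$ for some $\sigma\in(0,1)$. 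Nontriviality of $(u^{\ast},v^{\ast})$ is the last point: one must rule out $(0,0)$, which does not follow from Schaefer's theorem by itself; it should come from a structural consequence of $(\mathrm{H.2})$ (for instance that $f,g$ do not vanish identically near $(s,t)=(0,0)$ in an appropriate sense, or from the positivity analysis carried out elsewhere in the paper via the comparison principle), so I would argue nontriviality by contradiction using the equations together with that lower control.
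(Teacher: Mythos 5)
Your overall architecture (freeze the right-hand side, solve the decoupled problems by Minty--Browder, verify Schaefer's hypotheses, then Moser iteration plus Tolksdorf/Lieberman regularity) matches the paper's. One genuine difference is worth noting: by taking the ambient space to be $L^{pC}(\Omega)\times L^{qC}(\Omega)$ you get compactness of $\mathcal{T}$ for free from Rellich--Kondrachov, whereas the paper works in $W_0^{1,p}(\Omega)\times W_0^{1,q}(\Omega)$ and must pay for compactness in the gradient norm by proving a uniform Besov bound $z_{n_k}\in B_\infty^{1+1/(p-1),p}(\Omega)$ (resp.\ $B_\infty^{1+(p-1),p}(\Omega)$) via Simon's estimates and the compact embedding of Lemma \ref{Binf}; your route, if the Nemytskii operators can indeed be shown continuous on the Lebesgue product space (note that $(\mathrm{H.1})$ is only stated for Sobolev arguments, and $(\mathrm{H.2})$ bounds $\max(|sf|,|tg|)$ rather than $f$ and $g$ pointwise, so this is not automatic), would bypass the Besov machinery that the paper presents as a central feature.

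There are, however, two concrete gaps. First, your a priori bound does not close as described. Because of the exact balance $\frac{\alpha+1}{p}+\frac{\beta+1}{q}=1$ in \eqref{11}, Young's inequality applied to $k_{p,q}(x)|u|^{\alpha+1}|v|^{\beta+1}$ yields $\Vert k_{p,q}\Vert_\infty\left(\frac{\alpha+1}{p}\Vert u\Vert_p^p+\frac{\beta+1}{q}\Vert v\Vert_q^q\right)$, i.e.\ terms of \emph{exactly} the same homogeneity as the left-hand side $\Vert u\Vert_{1,p}^p+\Vert v\Vert_{1,q}^q$; there is no small $\varepsilon$ to absorb them. This is precisely why the paper compares with the first eigenvalue $\lambda_{p,q}$ of the coupled nonlinear system (estimate \eqref{3} in Proposition \ref{Tta}) rather than using an $\varepsilon$-Young splitting. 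Likewise the branch $|s|^{pC}+|t|^{qC}$ of $(\mathrm{H.2})$ cannot be ``absorbed'' for an upper bound, since $pC>p$ makes $\Vert u\Vert_{1,p}^{pC}$ superlinear in $\Vert u\Vert_{1,p}^{p}$; the paper uses that branch in the opposite direction, to derive the \emph{lower} bound $\Vert(u_\tau,v_\tau)\Vert_X\geq\varepsilon_0$ from $\Vert u_\tau\Vert_{1,p}^p\leq K(C_p^{pC}\Vert u_\tau\Vert_{1,p}^{pC}+C_q^{qC}\Vert v_\tau\Vert_{1,q}^{qC})$. Second, nontriviality: you correctly observe that Schaefer's theorem alone does not exclude the zero solution, but you leave the actual argument unspecified (deferring to "a structural consequence of $(\mathrm{H.2})$" or to the positivity section, which requires the extra hypotheses $(\mathrm{H.3})$--$(\mathrm{H.4})$ not available here). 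The missing step is exactly the $\varepsilon_0$ lower bound just described, which is part of Proposition \ref{Tta} and is what the theorem's claim of a \emph{nontrivial} solution rests on; without it your proof establishes existence but not nontriviality.
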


The proof of Theorem \ref{T1} is chiefly based on Schaefer's fixed point
Theorem (see, e.,g. \cite[Theorem 4, Section 9.2.2]{ev}, \cite{sma}), which
guarantees the existence of a weak solution $(u^{\ast },v^{\ast })$ in $%
W_{0}^{1,p}\left( \Omega \right) \times W_{0}^{1,q}\left( \Omega \right) $.
This required Besov spaces involvement, especially the embeddings from Besov
into Sobolev spaces which is one of a significant feature of the present
work. Moreover, we prove there exist two constants $\varepsilon _{1}$ and $%
\Theta $ such that $0<\varepsilon _{1}\leq \Vert u^{\ast }\Vert _{1,p}+\Vert
v^{\ast }\Vert _{1,q}\leq \Theta <+\infty .$ This ensures the nontriviality
character of the obtained solution $(u^{\ast },v^{\ast })$ in $%
W_{0}^{1,p}\left( \Omega \right) \times W_{0}^{1,q}\left( \Omega \right) $.

\mathstrut

The $L^{\infty }$-Boundedness for an arbitrary weak solution of problem (\ref%
{p}) is also provided in the present work. Combined with the regularity
result in \textbf{\ }\cite{tldf}, it ensures in particular that the obtained
solution $(u^{\ast },v^{\ast })$ is bounded in $C^{1,\sigma }(\overline{%
\Omega })\times C^{1,\sigma }(\overline{\Omega })$ for certain $\sigma \in
(0,1)$. Mainly through Moser's iteration process one can prove the next
result.

\begin{theorem}
\label{T3} Under assumptions $($\textrm{H.1}$)$ and $($\textrm{H.2}$)$, all
solutions $(u,v)$ of (\ref{p}) are bounded in $L^{\infty }(\Omega )\times {%
L^{\infty }(\Omega )}$.
\end{theorem}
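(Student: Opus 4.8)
The plan is to run a Moser iteration argument separately on each equation, exploiting the structure of the growth condition (H.2). The crucial observation is that for each $x$, the bound $|sf(x,s,t)| \wedge |tg(x,s,t)| = \max\{|sf|,|tg|\}$ is controlled by $k_{p,q}(x)\left[\left(|s|^{\alpha+1}|t|^{\beta+1}\right)\vee\left(|s|^{pC}+|t|^{qC}\right)\right] = k_{p,q}(x)\min\{|s|^{\alpha+1}|t|^{\beta+1},\,|s|^{pC}+|t|^{qC}\}$. In particular, after dividing by $|s|$ (resp. $|t|$) one gets $|f(x,u,v)| \le k_{p,q}(x)\, |u|^{-1}\bigl(|u|^{pC}+|v|^{qC}\bigr)$ and similarly $|g(x,u,v)|\le k_{p,q}(x)\,|v|^{-1}\bigl(|u|^{pC}+|v|^{qC}\bigr)$; one must be a little careful on the set where $u$ or $v$ vanishes, but there the corresponding product $|u|^{\alpha+1}|v|^{\beta+1}$ bound (or a limiting argument) handles things, and in any case $(H.1)$ already guarantees $f(x,u,v)\in L^{p_C'}(\Omega)$ and $g(x,u,v)\in L^{q_C'}(\Omega)$, which is the natural integrability for the scheme.

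First I would fix a weak solution $(u,v)$ and, for $M>0$ and a large exponent, use the truncated test functions $\varphi = u\,\min\{|u|,M\}^{p(\kappa-1)}$ in the first equation and $\psi = v\,\min\{|v|,M\}^{q(\kappa-1)}$ in the second, with $\kappa>1$ to be chosen. Plugging these into the weak formulation and using $|\nabla(\min\{|u|,M\}^{\kappa})|^p \lesssim \kappa^p |u|^{p(\kappa-1)}|\nabla u|^p$ on $\{|u|<M\}$, the left-hand side produces (via the Sobolev inequality in $W_0^{1,p}$) a term bounded below by $c\,\kappa^{-p}\,\| \min\{|u|,M\}^{\kappa}\, \|_{p^\star}^{p}$ up to the usual lower-order pieces. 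The right-hand side is estimated by $\int_\Omega k_{p,q}(x)\bigl(|u|^{pC}+|v|^{qC}\bigr)\min\{|u|,M\}^{p(\kappa-1)}\,dx$ plus the analogous mixed $\alpha,\beta$ term; using Hölder with the exponent $C$ (so that $pC$ and $p_C'$ are conjugate after the appropriate bookkeeping), each such integral is controlled by $\|k_{p,q}\|_\infty$ times a norm of $u$ (and $v$) in an $L^{r}$-space with $r<p^\star$, times $\|\min\{|u|,M\}^{\kappa}\|$ in a controllable space. Condition \eqref{10}, $1<C<\min\{p^\star/p,\,q^\star/q\}$, is exactly what makes $pC<p^\star$ and $qC<q^\star$, so that the starting iteration is seeded by the already-known integrability $u\in L^{p^\star}$, $v\in L^{q^\star}$ coming from the Sobolev embedding.

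Next I would set up the iteration: define $\kappa_0$ so that the first step is admissible, then $\kappa_{n+1}=\tfrac{p^\star}{p}\kappa_n$ (resp. with $q$), giving $\kappa_n\to\infty$, and obtain a recursion of the form $\|u\|_{p\kappa_{n+1}} \le (C_0\kappa_n)^{c/\kappa_n}\,\|u\|_{p\kappa_n}$ after letting $M\to\infty$ via Fatou. The product $\prod_n (C_0\kappa_n)^{c/\kappa_n}$ converges because $\sum_n \kappa_n^{-1}\log\kappa_n<\infty$ (geometric growth of $\kappa_n$), whence $\|u\|_{L^\infty}\le \lim_n \|u\|_{p\kappa_n} \le C(\|u\|_{p^\star},\|v\|_{q^\star},\|k_{p,q}\|_\infty)<\infty$, and symmetrically $\|v\|_{L^\infty}<\infty$. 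The system is coupled, so I would run the two iterations in parallel: the bound on $\|u\|_{p\kappa_{n+1}}$ involves $\|v\|$ in some subcritical $L^r$, which at stage $n=0$ is finite by Sobolev, and one can either (i) first prove both $u$ and $v$ lie in every $L^r$, $r<\infty$, by a joint induction on $n$, and only then pass to $L^\infty$, or (ii) carry the two quantities $a_n=\|u\|_{p\kappa_n}$, $b_n=\|v\|_{q\kappa_n}$ together in a coupled recursion and bound $\max\{a_n,b_n\}$.

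The main obstacle will be the bookkeeping of exponents in the Hölder splitting of the right-hand side: one must verify that the exponent $C$ from \eqref{10} simultaneously (a) makes $k_{p,q}(x)\bigl(|u|^{pC}+|v|^{qC}\bigr)$ pair correctly against the test-function factor $\min\{|u|,M\}^{p(\kappa-1)}$, and (b) leaves enough room below $p^\star$ (resp.\ $q^\star$) so that the Sobolev-embedding gain is strictly more than the loss from the $|u|^{pC}$, $|v|^{qC}$ factors — i.e.\ that the iteration exponent genuinely increases by the factor $p^\star/p>1$ at each step. This is where \eqref{10} and \eqref{11} are used in an essential way, and keeping track of how the coupling between the two equations enters (and does not blow up) each inductive step is the delicate part; once the correct conjugate exponents are identified, the rest is the standard Moser machinery together with a Fatou passage $M\to\infty$ and summation of the geometric-type series.
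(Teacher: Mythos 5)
Your plan is correct and its engine is the same as the paper's: a Moser iteration seeded by the Sobolev integrability $u\in L^{p^{\star }}$, $v\in L^{q^{\star }}$, with the right-hand sides reduced via $($\textrm{H.2}$)$ to $k_{p,q}(x)\left( |u|^{pC}+|v|^{qC}\right) $ and the room $pC<p^{\star }$, $qC<q^{\star }$ from (\ref{10}) driving the exponent gain. The route differs in two technical respects. First, to legitimize testing with powers of the solution the paper does not truncate: it proves an approximation lemma (Lemma \ref{approx}) replacing $(u^{\star },v^{\star })$ by $C^{1}(\overline{\Omega })\cap C^{2}(\Omega )$ solutions of regularized problems $-\Delta _{p}^{\varepsilon }u_{\varepsilon _{n}}=f_{\varepsilon _{n}}$ with smoothed data, runs the iteration on these smooth approximants (Lemma \ref{uepsnvepsn}), and only then passes to the limit; your truncation-plus-Fatou device reaches the same admissibility more directly and bypasses that entire preliminary step (whose uniform $L^{\infty }$ bound on $f_{\varepsilon _{n}}$ is the least convincing point of the paper's version). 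Second, for the coupling --- the point you rightly single out as delicate --- the paper's resolution is to synchronize the two exponent ladders by taking $\delta _{k}=pCf_{k}$ and $\gamma _{k}=qCf_{k}$ with the \emph{same} $f_{k}=D(C^{k}+1/D)$, so that $\frac{a_{k}}{\delta _{k}}+\frac{qC}{\gamma _{k}}=1$ and Young's inequality decouples the mixed term $|u|^{a_{k}}|v|^{qC}$ into $|u|^{\delta _{k}}+|v|^{\gamma _{k}}$; this collapses everything into the single scalar recursion $E_{k+1}\leq (A+B)^{C}E_{k}^{C}$ for $E_{k}=\Vert u^{\star }\Vert _{\delta _{k}}^{\delta _{k}}+\Vert v^{\star }\Vert _{\gamma _{k}}^{\gamma _{k}}$, which is precisely your option (ii) made concrete, and I would recommend adopting it. One small correction to your bookkeeping: with the natural H\"{o}lder split ($\Vert |u|^{p(C-1)}\Vert _{s}$ with $sp(C-1)=p^{\star }$) the gain per step is the factor $\frac{p^{\star }}{p}-(C-1)$, not $\frac{p^{\star }}{p}$; it is still $>1$ exactly because $C<p^{\star }/p$, so the geometric summation goes through unchanged. (The paper instead uses the embedding into $L^{pC}$ and gains only the factor $C$ per step, which also suffices.)
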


Another main achievement of our work consists to provide a precise sign
information on solutions of problem (\ref{p}). In this respect, we establish
the existence of a positive solution $(u,v)$ in the sense that both
components $u$ and $v$ are positive. Our argument relies on a comparison
principle based on fibering method due to Pohozaev. However, additional
assumptions on $f$ and $g$ are required and are formulated as follows:

\mathstrut 

\begin{description}
\item[$($\textrm{H.3}$)$] 
\begin{equation*}
\left\{ 
\begin{array}{l}
(f(x,s,t)-f(x,\bar{s},t))(s-\bar{s})\leq 0,\text{ for a.e }x\in \Omega , \\ 
\text{for all }t\in \mathbb{R}\text{, and all }s,{\overline{s}}\in {\mathbb{R%
}}\backslash {\mathbb{\{}}0{\mathbb{\}}},%
\end{array}%
\right. 
\end{equation*}
\begin{equation*}
\left\{ 
\begin{array}{l}
(g(x,s,t)-g(x,s,\bar{t}))(t-\bar{t})\leq 0,\text{ for a.e }x\in \Omega , \\ 
\text{for all }s\in \mathbb{R}\text{, and all }t,{\overline{t}}\in {\mathbb{R%
}}\backslash {\mathbb{\{}}0{\mathbb{\}}}.%
\end{array}%
\right. \text{ }
\end{equation*}
\end{description}

\mathstrut

\begin{description}
\item[$($\textrm{H.4}$)$] There exist functions $a_{p},a_{q}\in L^{\infty
}(\Omega ),b_{p}\in L^{\delta _{p}}(\Omega )$ and $b_{q}\in L^{\delta
_{q}}(\Omega ),$ with $\delta _{p}>N/p$, $\delta _{q}>N/q,$ such that 
\begin{equation*}
f(x,s,t)\geq {a_{p}}(x)s|s|^{{\hat{\alpha}}-1}|t|^{\hat{\beta}+1}+{b_{p}}%
(x)s|s|^{p-2}
\end{equation*}%
and%
\begin{equation*}
g(x,s,t)\geq {a_{q}}(x)|s|^{{\hat{\alpha}}+1}|t|^{\hat{\beta}-1}t+{b_{q}}%
(x)t|t|^{q-2},
\end{equation*}%
for a.e. $x\in \Omega ,\,$and all $(s,t)\in \mathbb{%
\mathbb{R}
}\times \mathbb{%
\mathbb{R}
}$, with 
\begin{equation}
\begin{array}{l}
\hat{\alpha}+1\neq p,\text{ }\hat{\beta}+1\neq q\text{ \ and \ }\dfrac{\hat{%
\alpha}+1}{p^{\ast }}+\dfrac{\hat{\beta}+1}{q^{\ast }}<1.%
\end{array}
\label{12}
\end{equation}
\end{description}

\mathstrut

The obtained result on positivity property is formulated as follows.

\begin{theorem}
\label{T2}Assume that $($\textrm{H.1}$)$ - $($\textrm{H.4}$)$ hold. Then
problem (\ref{p}) possesses a positive solution $(u,v)$ in $C^{1,\sigma }(%
\overline{\Omega })\times C^{1,\sigma }(\overline{\Omega })$ for certain $%
\sigma \in (0,1)$.\bigskip
\end{theorem}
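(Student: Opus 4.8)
The plan is to combine the three ingredients advertised in the introduction: the existence result of Theorem~\ref{T1}, the global boundedness result of Theorem~\ref{T3}, and a comparison argument based on the fibering method. Concretely, I would first invoke Theorem~\ref{T1} to obtain a nontrivial weak solution $(u,v)\in W_0^{1,p}(\Omega)\times W_0^{1,q}(\Omega)$, and then use assumption \textrm{(H.3)} to establish \emph{constant-sign} components. The point of \textrm{(H.3)} is precisely that $s\mapsto f(x,s,t)$ and $t\mapsto g(x,s,t)$ are nonincreasing away from $0$; testing the equation for $u$ with $u^-:=\max\{-u,0\}$ (and similarly the equation for $v$ with $v^-$) and exploiting the monotonicity together with $f(x,0,t)$, $g(x,s,0)$ controlled via \textrm{(H.4)} should force $u^-\equiv 0$ and $v^-\equiv 0$, hence $u\ge 0$, $v\ge 0$. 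One has to be a little careful because \textrm{(H.3)} is only stated for $s,\bar s\neq 0$, so the comparison has to be done on $\{u<0\}$ where the sign is fixed, using the lower bounds in \textrm{(H.4)} to handle the boundary behaviour at $\{u=0\}$.

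Next I would upgrade $u\ge 0$, $v\ge 0$ to strict positivity $u>0$, $v>0$ in $\Omega$ (and $\partial u/\partial\nu<0$, $\partial v/\partial\nu<0$ on $\partial\Omega$). Here the lower bounds in \textrm{(H.4)} are essential: since $u,v\ge 0$, we get $-\Delta_p u\ge b_p(x)u^{p-1}+a_p(x)u^{\hat\alpha}v^{\hat\beta+1}\ge b_p(x)u^{p-1}$ with $b_p\in L^{\delta_p}(\Omega)$, $\delta_p>N/p$, and likewise $-\Delta_q v\ge b_q(x)v^{q-1}$. By Theorem~\ref{T3} and the regularity result of \cite{tldf}, $(u,v)\in C^{1,\sigma}(\overline\Omega)\times C^{1,\sigma}(\overline\Omega)$, so $u$ and $v$ are nonnegative supersolutions of a weighted $p$- (resp.\ $q$-) Laplacian eigenvalue-type inequality with $L^{\delta}$ potential, and Vázquez's strong maximum principle (and Hopf lemma) applies once one checks $u\not\equiv 0$, $v\not\equiv 0$. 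The nontriviality of \emph{each} component is where I expect the fibering/comparison argument of Pohozaev to enter: if, say, $v\equiv 0$, then $-\Delta_p u=f(x,u,0)$ with $u\ge 0$, and one has to rule this out; the fibering method along the ray $t\mapsto(t^{1/p}u_0,t^{1/q}v_0)$ attached to the homogeneous nonlinearity $|s|^{\alpha+1}|t|^{\beta+1}$ in \textrm{(H.2)} together with the constraint \eqref{11} gives a quantitative lower bound of the type $\|u\|_{1,p}+\|v\|_{1,q}\ge\varepsilon_1>0$ that cannot be realized with one component vanishing, because \eqref{11} couples the two exponents.

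The comparison principle itself I would formulate as a separate lemma: if $\underline u$ is a (sub)solution of $-\Delta_p\underline u=\lambda(x)\underline u^{\,\hat\alpha}$-type problem built from the lower bounds in \textrm{(H.4)}, and $u$ solves $(P)$, then $u\ge\underline u$. The fibering method enters in constructing $\underline u$: one minimizes the fibered functional $\varphi(t)=\frac{t^p}{p}\int|\nabla w|^p-\frac{t^{\hat\alpha+1}}{\hat\alpha+1}\int a_p|w|^{\hat\alpha+1}$ over $t>0$ for a fixed direction $w$, the condition $\hat\alpha+1\neq p$ in \eqref{12} guaranteeing a nondegenerate critical point, and the subcriticality $\frac{\hat\alpha+1}{p^\ast}+\frac{\hat\beta+1}{q^\ast}<1$ in \eqref{12} ensuring the relevant embeddings are compact so that the infimum is attained by a positive function. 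The main obstacle, as the authors themselves flag, is the \emph{noncooperative} structure: $f(x,u,\cdot)$ and $g(x,\cdot,v)$ need not be monotone, so one cannot run a standard sub-supersolution iteration for the \emph{system}. The workaround is to decouple—use \textrm{(H.3)} for the diagonal monotonicity to get signs, and use \textrm{(H.4)} to bound each component below by a \emph{scalar} problem whose positive solution is produced by fibering—rather than trying to compare the full system at once. Getting the constants in the scalar comparison to be compatible with the a priori bounds $\varepsilon_1\le\|u\|_{1,p}+\|v\|_{1,q}\le\Theta$ from Theorem~\ref{T1}, so that the positive subsolution actually lies below the solution $(u,v)$ and not merely below \emph{some} solution, is the delicate bookkeeping step I would budget the most effort for.
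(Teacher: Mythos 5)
Your final paragraph is, in essence, the paper's actual proof: freeze $v^{\star}$, use the lower bounds in $($\textrm{H.4}$)$ to build a \emph{scalar} auxiliary problem $-\Delta_{p}z=a_{p}(x)z|z|^{\hat{\alpha}-1}|v^{\star }|^{\hat{\beta}+1}+b_{p}(x)z|z|^{p-2}$, produce a strictly positive solution $\mathcal{U}$ of it by Pohozaev's fibering method (this is Lemma \ref{L4}, resting on Proposition \ref{P1} and the weak continuity of $A_{p},B_{p}$ in Lemma \ref{AB}, where \eqref{12} plays exactly the role you assign to it), note that $($\textrm{H.4}$)$ makes $\mathcal{U}$ a subsolution of $-\Delta_{p}z=f(x,z,v^{\star })$ while $u^{\star }$ is a solution, and conclude $\mathcal{U}\leq u^{\star }$ by testing with $(\mathcal{U}-u^{\star })^{+}$ and invoking the diagonal monotonicity $($\textrm{H.3}$)$. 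This gives $u^{\star }\geq \mathcal{U}>0$ in one stroke, and symmetrically for $v^{\star }$. Two of your worries are therefore moot: there is no ``bookkeeping'' of constants against $\varepsilon _{1}$ and $\Theta $, because the comparison is run against the \emph{specific} solution $u^{\star }$ and uses only the pointwise inequalities in $($\textrm{H.3}$)$--$($\textrm{H.4}$)$; and there is no need to separately exclude $v^{\star }\equiv 0$ by a fibering argument along rays, since the comparison already yields strict positivity of each component.

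The genuine gap is your preliminary step. Testing the $u$-equation with $u^{-}$ does \emph{not} force $u^{-}\equiv 0$ under the stated hypotheses: on $\{u<0\}$, $($\textrm{H.4}$)$ gives a \emph{lower} bound on $f$, so multiplying by $u<0$ yields only
\begin{equation*}
\Vert \nabla u^{-}\Vert _{p}^{p}=\int_{\{u<0\}}u\,f(x,u,v)\,dx\leq \int_{\Omega }a_{p}^{+}(x)|u^{-}|^{\hat{\alpha}+1}|v|^{\hat{\beta}+1}dx+\int_{\Omega }b_{p}^{+}(x)|u^{-}|^{p}dx,
\end{equation*}
and since $a_{p}$ and $b_{p}$ are not assumed nonpositive (Lemma \ref{L4} in fact requires $a_{p}^{+}\neq 0$), the right-hand side has no favourable sign. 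Likewise $($\textrm{H.3}$)$ compares $f(x,s,t)$ with $f(x,\bar{s},t)$ for $s,\bar{s}\neq 0$ and gives no control of $f$ against $0$ on $\{u<0\}$. So the ``first get $u\geq 0$, then apply V\'{a}zquez'' route stalls at its first step. The fix is simply to delete that step and compare directly with the strictly positive subsolution $\mathcal{U}$, as in your last paragraph and in the paper; the strong maximum principle is then not needed at all.
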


We indicate an example showing the applicability of Theorems \ref{T1}, \ref%
{T3} and \ref{T2}.

\begin{example}
Consider the functions $f,g:\Omega \times 
\mathbb{R}
^{2}\rightarrow 
\mathbb{R}
$ defined by%
\begin{equation*}
\begin{array}{l}
f(x,s,t)=\dfrac{k_{p,q}(x)}{2}\left( \dfrac{1}{1+|s|^{-\alpha }}%
+h_{1}(s)\right) |t|^{\beta +1}%
\end{array}%
\end{equation*}%
and%
\begin{equation*}
\begin{array}{l}
g(x,s,t)=\dfrac{k_{p,q}(x)}{2}|s|^{\alpha +1}\left( \dfrac{1}{1+|t|^{-\beta }%
}+h_{2}(t)\right) ,%
\end{array}%
\end{equation*}%
where $k_{p,q}(\cdot )$ is a bounded positive function in $L^{\infty
}(\Omega )$ and%
\begin{equation*}
h_{1}(s)=\left\{ 
\begin{array}{cc}
s^{\alpha } & \text{ if }s\geq 1 \\ 
1 & \text{ if }s\leq 1%
\end{array}%
\right. ,\text{ \ }h_{2}(t)=\left\{ 
\begin{array}{cc}
t^{\beta } & \text{ if }t\geq 1 \\ 
1 & \text{ if }t\leq 1,%
\end{array}%
\right. 
\end{equation*}%
with%
\begin{equation*}
-1<\alpha ,\beta <0,\text{ }\dfrac{\alpha +1}{p}+\dfrac{\beta +1}{q}=1.
\end{equation*}%
It is straightforward to check that conditions $($\textrm{H.1}$)$-$($\textrm{%
H.4}$)$ are verified. Consequently, Theorems \ref{T1}, \ref{T3} and \ref{T2}
are applicable providing positive and bounded solutions for system (\ref{p})
with equations whose right-hand sides are given through the preceding
functions $f$ and $g$.
\end{example}

The rest of the paper is organized as follows. Section \ref{S1} contains the
existence of nontrivial solutions for problem (\ref{p}). Section \ref{S2}
deals with the positivity property while section \ref{S3} focuses on $%
L^{\infty }$-boundedness of solutions.

\section{Existence of solutions}

\label{S1}

Given a number $1<p<\infty $, the space $L^{p}(\Omega )$ is endowed with the
norm $\left\Vert u\right\Vert _{p}=(\int_{\Omega }|u|^{p}\ dx)^{1/p}$, while
on $W_{0}^{1,p}\left( \Omega \right) $ we consider the norm $\left\Vert
u\right\Vert _{1,p}=\left( \int_{\Omega }\left\vert \nabla u\right\vert
^{p}\ dx\right) ^{1/p}$. Throughout this paper, $p^{\prime }=\dfrac{p}{p-1}$
and $p^{\star }=\frac{Np}{N-p}$ are the conjugate and the Sobolev critical
exponents, respectively, while $\langle ,\,\rangle _{-1,1}$ denotes the
duality brackets between the space $W_{0}^{1,p}(\Omega )$ and its
topological dual $W^{1,p^{\prime }}(\Omega )$.

\begin{remark}
\label{R1} Fix $(u,v)$ in $W_{0}^{1,p}(\Omega )\times W_{0}^{1,q}(\Omega ).$
By (\ref{10}), (\ref{11}) together with Young's and Jensen's inequalities it
holds 
\begin{equation*}
{\int_{\Omega }}|u|^{\alpha +1}|v|^{\beta +1}dx\leq \Vert u\Vert
_{p}^{p}+\Vert v\Vert _{q}^{q}\leq \left( \Vert u\Vert _{p}^{pC}+1\right)
+\left( \Vert v\Vert _{q}^{qC}+1\right) .
\end{equation*}%
Then, Poincar\'{e}'s inequality implies 
\begin{equation*}
({\int_{\Omega }}uf(x,u,v)dx)\wedge (\int_{\Omega }vg(x,u,v)dx)\leq
const.\left( \Vert u\Vert _{1,p}^{pC}+\Vert v\Vert _{1,q}^{qC}+1\right) .
\end{equation*}%
Hence, by assumptions (\ref{10}) - (\ref{11}), Sobolev embedding Theorems
are applicable.
\end{remark}

We will also make use of Besov space $B_{\mathbf{\mathrm{p}}}^{\sigma
,p}(\Omega ),$ for $1\leq \mathbf{\mathrm{p}}\leq \infty ,$ defined as
follows%
\begin{equation*}
B_{\mathbf{\mathrm{p}}}^{\sigma ,p}(\Omega )=\left[ W^{E(\sigma
)+1,p}(\Omega ),W^{E(\sigma ),p}(\Omega )\right] _{E(\sigma )+1-\sigma ,%
\mathbf{\mathrm{p}}},
\end{equation*}%
where $E(\sigma )$ designates the entire part of the real $\sigma $ (see 
\cite{s2}). Note that for a bounded domain $\Omega $ the above definition
remains valid for $W_{0}^{s,p}(\Omega )$ instead of $W^{s,p}(\Omega )$.

\begin{lemma}
\label{Binf} The embeddings $B_{\infty }^{1+C,p}(\Omega )\hookrightarrow
W_{0}^{1,p}(\Omega )$ and $B_{\infty }^{1+C,q}(\Omega )\hookrightarrow
W_{0}^{1,q}(\Omega )$ are compact.
\end{lemma}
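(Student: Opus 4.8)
The plan is to factor each of the two embeddings through an intermediate Sobolev space of integer order at least $2$, and then to invoke the Rellich--Kondrachov theorem. Put $\sigma:=1+C$ and $m:=E(\sigma)$; since $C>1$ (see \eqref{10}) we have $\sigma>2$, hence $m\geq 2$. Exploiting the remark above that on the bounded domain $\Omega$ the Besov scale may equally be built from $W_{0}^{s,p}(\Omega)$ --- which is the version relevant here, since the target space $W_{0}^{1,p}(\Omega)$ in the statement carries the zero trace --- the definition reads
\[
B_{\infty}^{1+C,p}(\Omega)=\bigl[W_{0}^{m+1,p}(\Omega),\,W_{0}^{m,p}(\Omega)\bigr]_{\theta,\infty},\qquad \theta:=m+1-\sigma\in(0,1].
\]

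First I would record the elementary interpolation fact that if $X_{0}\hookrightarrow X_{1}$ continuously, then $[X_{0},X_{1}]_{\theta,\infty}\hookrightarrow X_{1}$ continuously for every $\theta$ in this range: for any splitting $a=a_{0}+a_{1}$ with $a_{i}\in X_{i}$ one has $\|a\|_{X_{1}}\leq\|a_{0}\|_{X_{1}}+\|a_{1}\|_{X_{1}}\leq c\,(\|a_{0}\|_{X_{0}}+\|a_{1}\|_{X_{1}})$ with $c$ a generic constant, whence $\|a\|_{X_{1}}\leq c\,K(1,a)\leq c\,\|a\|_{[X_{0},X_{1}]_{\theta,\infty}}$, $K$ being the Peetre $K$-functional. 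Applied with $X_{0}=W_{0}^{m+1,p}(\Omega)\hookrightarrow W_{0}^{m,p}(\Omega)=X_{1}$, this yields the continuous embedding $B_{\infty}^{1+C,p}(\Omega)\hookrightarrow W_{0}^{m,p}(\Omega)$. Since $m\geq 2$, the inclusion $W_{0}^{m,p}(\Omega)\hookrightarrow W_{0}^{m-1,p}(\Omega)$ is compact by Rellich--Kondrachov while $W_{0}^{m-1,p}(\Omega)\hookrightarrow W_{0}^{1,p}(\Omega)$ is continuous; composing them gives a compact embedding $W_{0}^{m,p}(\Omega)\hookrightarrow W_{0}^{1,p}(\Omega)$, and hence the composition $B_{\infty}^{1+C,p}(\Omega)\hookrightarrow W_{0}^{1,p}(\Omega)$ is compact, being continuous followed by compact. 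The claim for $B_{\infty}^{1+C,q}(\Omega)\hookrightarrow W_{0}^{1,q}(\Omega)$ follows verbatim with $q$ replacing $p$ and the same integer $m=E(1+C)$.

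I do not expect a real obstacle here: the argument is in essence ``smoothness to spare plus Rellich''. The two points demanding a little care are (i) making explicit that the Besov space in the statement is the one generated by the $W_{0}^{s,p}$-scale, so that the traces match and $W_{0}^{1,p}(\Omega)$ is indeed the right target, and (ii) observing that it is irrelevant whether $1+C$ is an integer --- the bound $\|a\|_{X_{1}}\lesssim K(1,a)$ used above holds for every $\theta$, and in all cases $E(1+C)\geq 2$, which is all that Rellich requires. Alternatively, one could avoid the intermediate continuous embedding altogether by using the interpolation of compactness, namely that $X_{0}\hookrightarrow\hookrightarrow X_{1}$ implies $[X_{0},X_{1}]_{\theta,\infty}\hookrightarrow\hookrightarrow X_{1}$, applied to the compact inclusion $W_{0}^{m+1,p}(\Omega)\hookrightarrow\hookrightarrow W_{0}^{m,p}(\Omega)$, and then again to $W_{0}^{m,p}(\Omega)\hookrightarrow\hookrightarrow W_{0}^{1,p}(\Omega)$.
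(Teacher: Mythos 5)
Your argument is correct, and it locates the compactness differently from the paper. You use only the elementary continuous embedding $[X_{0},X_{1}]_{\theta ,\infty }\hookrightarrow X_{1}$ (the bound $\Vert a\Vert _{X_{1}}\leq c\,K(1,a)$) to place $B_{\infty }^{1+C,p}(\Omega )$ continuously inside $W_{0}^{E(1+C),p}(\Omega )$, and then obtain all the compactness from a single Rellich--Kondrachov step between integer-order Sobolev spaces, which is available because $C>1$ in (\ref{10}) forces $E(1+C)\geq 2$. The paper goes the other way: it first invokes a theorem asserting that real interpolation of a compact embedding is again compact, applies it to the compact inclusion $W_{0}^{E(\sigma )+1,p}(\Omega )\hookrightarrow W_{0}^{E(\sigma ),p}(\Omega )$ to get compactness of $B_{\mathbf{\mathrm{p}}}^{\sigma ,p}(\Omega )\hookrightarrow W_{0}^{E(\sigma ),p}(\Omega )$ directly, and then reaches $W_{0}^{1,p}(\Omega )$ by an ``iteration process'' that is left implicit; this is essentially the alternative you sketch in your final sentence. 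Your version is more economical --- it needs no interpolation-of-compactness theorem, only the $K$-functional estimate at $t=1$ together with the Adams/Rellich theorem --- and it makes the descent from order $E(1+C)$ to order $1$ explicit rather than leaving it to an unspecified iteration; the price is that it genuinely relies on the smoothness surplus $E(1+C)\geq 2$, whereas the paper's route would also cover exponents $0<C\leq 1$, where your factorization would no longer place a compact step below the interpolation couple. Both proofs share the two points of care you flag: working in the $W_{0}^{s,p}$-scale so that the target is genuinely $W_{0}^{1,p}(\Omega )$, and the irrelevance of whether $1+C$ is an integer.
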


\begin{proof}
Observe that \cite[Proposition 4.3]{gm} is applicable due to the compactness
of the embedding $W_{0}^{E(\sigma )+1,p}(\Omega )\hookrightarrow
W_{0}^{E(\sigma ),p}(\Omega )$ (see \cite[Theorem 6.2]{ad} with $\Omega
_{0}=\Omega ,$ $k=N,$ $j=E(\sigma ),$ $\mathbf{\mathrm{p}}=p$ and $m=1$).
Thus the embedding $B_{\mathbf{\mathrm{p}}}^{\sigma ,p}(\Omega
)\hookrightarrow W_{0}^{E(\sigma ),p}(\Omega )$ is compact and therefore,
the embedding $B_{\infty }^{1,p}(\Omega )$ (resp. $B_{\infty }^{1,q}(\Omega
) $) in $W_{0}^{1,p}(\Omega )$ (resp. $W_{0}^{1,q}(\Omega )$) is compact. By
the iteration process, we deduce that the embedding $B_{\infty
}^{1+C,p}(\Omega )$ (resp. $B_{\infty }^{1+C,q}(\Omega )$) in $%
W_{0}^{1,p}(\Omega )$ (resp. $W_{0}^{1,q}(\Omega )$) is also compact.
\end{proof}

In the sequel, We denote $t^{\pm }:=\max \{0,\pm t\}$ and we set $%
X:=W_{0}^{1,p}(\Omega )\times W_{0}^{1,q}(\Omega )$ equipped with the norm $%
\left\Vert (u,v)\right\Vert _{X}=\left\Vert u\right\Vert _{1,p}+\left\Vert
v\right\Vert _{1,q}$.

\bigskip

In this section we focus on the existence of solutions for system (\ref{p}%
).\ Our approach is based on the following Schaefer's fixed point theorem
(see e.g., \cite[p.29]{sma} and \cite[chap. 9.2.2]{ev}).

\begin{theorem}
\label{TT}Assume that $T:X\longrightarrow {X}$ is a continuous mapping which
is compact on each bounded subset $\mathcal{B}$ of $X.$ Then, either the
equation $x=\tau {Tx}$ has a solution for $\tau =1$ or the set of all
solution $x$ is unbounded for $0<\tau <1.$\bigskip
\end{theorem}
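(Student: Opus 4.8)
The plan is to deduce Theorem~\ref{TT} from Schauder's fixed point theorem via the standard radial-retraction device. I would argue by contradiction: suppose neither alternative holds, i.e.\ the equation $x=Tx$ has no solution and, at the same time, the set $S:=\{x\in X:\ x=\tau Tx\ \text{for some}\ \tau\in(0,1)\}$ is bounded. Since $S$ is bounded, fix a radius $M>0$ with $\|x\|_X<M$ for every $x\in S$, and set $\overline{B}:=\{x\in X:\ \|x\|_X\le M\}$, a nonempty closed bounded convex subset of $X$.

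Next I would introduce the radial retraction $\rho:X\to\overline{B}$, given by $\rho(y)=y$ when $\|y\|_X\le M$ and $\rho(y)=My/\|y\|_X$ when $\|y\|_X>M$; this map is continuous and carries relatively compact sets to relatively compact sets. Consider $\Phi:=\rho\circ T:\overline{B}\to\overline{B}$. Because $T$ is continuous and, by hypothesis, maps the bounded set $\overline{B}$ into a relatively compact subset of $X$, the composition $\Phi$ is continuous with $\Phi(\overline{B})$ relatively compact; thus $\Phi$ is a compact self-map of the closed bounded convex set $\overline{B}$, and Schauder's theorem (if one prefers, applied on the compact convex set $\overline{\mathrm{conv}}\,\Phi(\overline{B})$ by Mazur's theorem) produces a point $x\in\overline{B}$ with $x=\rho(Tx)$.

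It then remains to split into cases. If $\|Tx\|_X\le M$, then $\rho(Tx)=Tx$, so $x=Tx$ is a fixed point, contradicting our assumption. If $\|Tx\|_X>M$, then $x=\rho(Tx)=\tau Tx$ with $\tau:=M/\|Tx\|_X\in(0,1)$, so $x\in S$ and hence $\|x\|_X<M$; but simultaneously $\|x\|_X=\|\rho(Tx)\|_X=M$, which is impossible. Either branch is contradictory, so at least one of the two alternatives in the statement must hold.

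The only substantial ingredient is Schauder's fixed point theorem itself; everything else is bookkeeping. The step that deserves care — and where the full strength of the hypothesis is used — is the verification that $\Phi$ is a \emph{compact} map: this relies on $T$ being compact on bounded subsets of $X$, not merely continuous, and it is indispensable, since in the infinite-dimensional setting $X=W_0^{1,p}(\Omega)\times W_0^{1,q}(\Omega)$ a continuous self-map of $\overline{B}$ need not have a fixed point.
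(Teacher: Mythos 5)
Your argument is correct and complete: the radial--retraction reduction to Schauder's fixed point theorem (with the Mazur-hull remark to justify applying Schauder to the compact map $\Phi=\rho\circ T$ on $\overline{B}$) is exactly the standard proof of Schaefer's theorem, and the two-case analysis at the fixed point $x=\rho(Tx)$ is handled properly. The paper itself does not prove this statement --- it quotes it from \cite{sma} and \cite[Section 9.2.2]{ev} --- and your proof is essentially the one given in those references, so there is nothing to reconcile.
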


Let $\mathcal{T}:X\rightarrow X$ be the nonlinear operator such that $%
\mathcal{T}(u,v)=(z,w)$, where $(z,w)$ is required to satisfy 
\begin{equation*}
(P_{z,w})\mathcal{\qquad }\left\{ 
\begin{array}{ll}
-\Delta _{p}z=f(x,u,v) & \hbox{ in }\Omega \\ 
-\Delta _{q}w=g(x,u,v) & \hbox{ in }\Omega \\ 
z=w=0 & \hbox{ on }\partial \Omega .%
\end{array}%
\right.
\end{equation*}%
By $($\textrm{H.1}$)$, the unique solvability of $(z,w)$ in $(\mathcal{P}%
_{z,w})$ is readily derived from Minty-Browder Theorem (see, e.g, \cite{B}).
Thus, the operator $\mathcal{T}$ is well defined.

\begin{lemma}
\label{L3} Under assumptions $($\textrm{H.1}$)$ and $($\textrm{H.2}$)$ the
operator $\mathcal{T}$ is continuous.
\end{lemma}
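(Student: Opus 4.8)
The plan is to show sequential continuity of $\mathcal{T}$: take $(u_n,v_n)\to(u,v)$ in $X$ and prove $\mathcal{T}(u_n,v_n)=(z_n,w_n)\to(z,w)=\mathcal{T}(u,v)$ in $X$. Since the two equations in $(P_{z,w})$ are decoupled once the right-hand sides are frozen, it suffices to treat the $p$-Laplacian component; the $q$-component is identical. First I would use assumption $(\mathrm{H.1})$ together with the Nemytskii/continuity properties of the Carath\'eodory function $f$: passing to a subsequence, $u_n\to u$ and $v_n\to v$ a.e.\ in $\Omega$ (after extracting), and a standard argument (Vitali's convergence theorem, using the growth bound in $(\mathrm{H.2})$ which by Remark~\ref{R1} makes $\{f(\cdot,u_n,v_n)\}$ equi-integrable in $L^{p_C'}(\Omega)$) gives $f(\cdot,u_n,v_n)\to f(\cdot,u,v)$ strongly in $L^{p_C'}(\Omega)$, hence in $W^{-1,p'}(\Omega)$ via the compact Sobolev embedding $L^{p_C'}(\Omega)\hookrightarrow W^{-1,p'}(\Omega)$.

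Next I would exploit the structure of the $p$-Laplacian. Testing the equation for $z_n$ with $z_n$ itself gives
\begin{equation*}
\|z_n\|_{1,p}^{p}=\langle f(\cdot,u_n,v_n),z_n\rangle_{-1,1}\le \|f(\cdot,u_n,v_n)\|_{-1,p'}\,\|z_n\|_{1,p},
\end{equation*}
so $\{z_n\}$ is bounded in $W_0^{1,p}(\Omega)$; up to a subsequence $z_n\rightharpoonup \tilde z$ weakly in $W_0^{1,p}(\Omega)$. Passing to the limit in the weak formulation $\int_\Omega|\nabla z_n|^{p-2}\nabla z_n\nabla\varphi\,dx=\langle f(\cdot,u_n,v_n),\varphi\rangle_{-1,1}$, using the strong convergence of the right-hand sides, identifies $\tilde z$ as the unique solution of $-\Delta_p\tilde z=f(\cdot,u,v)$, i.e.\ $\tilde z=z$. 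To upgrade weak to strong convergence I would use the $(S_+)$ property of $-\Delta_p$ on $W_0^{1,p}(\Omega)$: from
\begin{equation*}
\langle -\Delta_p z_n-(-\Delta_p z),\,z_n-z\rangle_{-1,1}=\langle f(\cdot,u_n,v_n)-f(\cdot,u,v),\,z_n-z\rangle_{-1,1}+\langle-\Delta_p z,\,z-z_n\rangle_{-1,1}\to 0,
\end{equation*}
together with $z_n\rightharpoonup z$, the $(S_+)$ property yields $z_n\to z$ strongly in $W_0^{1,p}(\Omega)$. The same reasoning gives $w_n\to w$ in $W_0^{1,q}(\Omega)$, hence $\mathcal{T}(u_n,v_n)\to\mathcal{T}(u,v)$ in $X$.

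Finally, since the limit $(z,w)$ is the same for every subsequence (it is the unique solution of the limiting system), a standard subsequence-of-subsequence argument shows the whole sequence converges, which establishes continuity of $\mathcal{T}$ on all of $X$. The main obstacle I anticipate is the first step: justifying strong convergence $f(\cdot,u_n,v_n)\to f(\cdot,u,v)$ in $L^{p_C'}(\Omega)$. This is where $(\mathrm{H.1})$ and $(\mathrm{H.2})$ must be used carefully, since the growth in $(\mathrm{H.2})$ is only of critical-type order $|s|^{pC}+|t|^{qC}$ (borderline for membership in $L^{p_C'}$), so one cannot simply dominate by a fixed $L^{p_C'}$ function; instead equi-integrability must be extracted from the strong $X$-convergence of $(u_n,v_n)$ and the subcritical strict inequality in \eqref{10}, and then Vitali's theorem applies. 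Everything downstream (boundedness of $z_n$, weak limits, $(S_+)$) is routine once this convergence is in hand.
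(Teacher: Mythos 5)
Your proposal is correct, and its first half coincides with the paper's own argument: both reduce continuity of $\mathcal{T}$ to the strong convergence $\mathbf{f}_n\to\mathbf{f}$ in $L^{p_{C}^{\prime }}(\Omega )$ (and $\mathbf{g}_n\to\mathbf{g}$ in $L^{q_{C}^{\prime }}(\Omega )$), and both recover the full sequence from subsequences via the Urysohn subsequence principle. The differences lie in the two technical steps. For the Nemytskii convergence the paper does not use Vitali: from $u_{n_k}\to u$ in $L^{pC}(\Omega )$ it extracts a further subsequence converging a.e.\ and dominated by fixed majorants $U\in L^{pC}(\Omega )$, $V\in L^{qC}(\Omega )$, bounds $|\mathbf{f}_{n_{k_l}}(x)|$ by $\sup \{|f(x,s,t)|:|s|\leq U(x),\,|t|\leq V(x)\}$, and applies dominated convergence. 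Your equi-integrability route is a legitimate alternative, but note that both versions rest on the same delicate point: $($\textrm{H.2}$)$ and Remark \ref{R1} control $sf$ and $\int uf$, not $f$ itself, so neither the paper's dominating function nor your claimed equi-integrability in $L^{p_{C}^{\prime }}(\Omega )$ follows mechanically from the stated hypotheses; this is exactly the obstacle you flag, and $($\textrm{H.1}$)$ must be invoked to make either version precise. For the passage from $\mathbf{f}_n\to\mathbf{f}$ to $z_n\to z$, the paper is more elementary and quantitative: it tests the difference of the equations against $z_n-z$ and uses the monotonicity of $-\Delta _p$ to obtain directly $\Vert z_n-z\Vert _{1,p}^{p-1}\leq c_p\Vert \mathbf{f}_n-\mathbf{f}\Vert _{p_{C}^{\prime }}$, with no weak compactness, limit identification, or $(S_+)$ property. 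Your route is heavier machinery but is arguably more robust, since the paper's displayed inequality $\Vert z_n-z\Vert _{1,p}^{p}\leq \int_{\Omega }|\mathbf{f}_n-\mathbf{f}||z_n-z|\,dx$ as written only reflects the monotonicity estimate valid for $p\geq 2$; for $1<p<2$ one needs either the degenerate-case inequality (using a priori bounds on $\Vert z_n\Vert _{1,p}$) or precisely the $(S_+)$ argument you propose. Either way the conclusion is reached.
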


\begin{proof}
Let $(u_{n},v_{n})\in X,$ $(z_{n},w_{n})=\mathcal{T}(u_{n},v_{n})$ with 
\begin{equation}
(u_{n},v_{n})\rightarrow (u,v)\text{ in }X.  \label{1}
\end{equation}%
Set $(z,w)=\mathcal{T}(u,v)$ and 
\begin{equation}
\mathbf{f}_{n}=f(\cdot ,u_{n},v_{n}),\text{ \ }\mathbf{g}_{n}=g(\cdot
,u_{n},v_{n})\text{\ \ }\mathbf{f}=f(\cdot ,u,v),\text{ \ }\mathbf{g}%
=g(\cdot ,u,v).  \label{2}
\end{equation}%
The continuity of $\mathcal{T}$ follows if we show that 
\begin{equation*}
\mathbf{f}_{n}\rightarrow \mathbf{f}\text{ \ in }L^{p_{C}^{\prime }}(\Omega )%
\text{ \ and \ }\mathbf{g}_{n}\rightarrow \mathbf{g}\text{ \ in }%
L^{q_{C}^{\prime }}(\Omega ).
\end{equation*}
Let $\{\mathbf{f}_{n_{k}}\}$ be a subsequence. By (\ref{1}), it follows that 
$u_{n_{k}}\rightarrow u$ in $L^{pC}(\Omega )$ and $v_{n_{k}}\rightarrow v$
in $L^{qC}(\Omega )$. The continuity of $f$ implies $\mathbf{f}%
_{n_{k}}(x)\rightarrow \mathbf{f}(x)$ for a.e $x$ in $\Omega .$

On the other hand, one can extract subsequences $u_{n_{k_{l}}}$ and $%
v_{n_{k_{l}}}$ such that $u_{n_{k_{l}}}(x)\rightarrow u(x)$ and $%
v_{n_{k_{l}}}(x)\rightarrow v(x)$ for \textit{a.e} $x$ in $\Omega .$
Moreover, there exist positive functions $U\in L^{pC}(\Omega )$ and $V\in
L^{qC}(\Omega )$ such that $|u_{n_{k_{l}}}(x)|\leq U(x)$ and $%
|v_{n_{k_{l}}}(x)|\leq V(x),$ \textit{a.e.} $x$ in $\Omega $ and all $l\in 
\mathbb{N}
$. Then, the continuity of $f$ gives 
\begin{equation*}
|\mathbf{f}_{n_{k_{l}}}(x)|\leq {\sup_{\substack{ -U(x)\leq s\leq U(x) \\ %
-V(x)\leq t\leq V(x)}}}\left\vert {f}(x,s,t)\right\vert ,\,\,\,\,\forall
l\in 
\mathbb{N}
,\,\,\,\hbox{ a.e }x\in \Omega .
\end{equation*}%
Owing to Lebesgue's dominated convergence Theorem, we conclude that 
\begin{equation*}
{\lim_{l\rightarrow +\infty }}\int_{\Omega }|\mathbf{f}_{n_{k_{l}}}(x)-%
\mathbf{f}(x)|^{p_{C}^{\prime }}dx=0.
\end{equation*}%
From the \textit{Urysohn's subsequence principle} (see, e.g., \cite[%
Proposition A.6, p.179]{so} or \cite{df3}), it follows that all the sequence
($\mathbf{f}_{n}$) obeys to 
\begin{equation}
{\lim_{n\rightarrow +\infty }}\left\Vert \mathbf{f}_{n}-\mathbf{f}%
\right\Vert _{p_{C}^{\prime }}^{p_{C}^{\prime }}dx=0.  \label{bigf}
\end{equation}%
Multiplying each equation in $(\mathcal{P}_{z,w})$ by $z_{n}-z$ and $w_{n}-w$%
, respectively, and integrating over $\Omega $, one gets 
\begin{equation*}
\Vert z_{n}-z\Vert _{1,p}^{p}\leq {\int_{\Omega }}\left\vert \mathbf{f}%
_{n}(x)-\mathbf{f}(x)\right\vert |z_{n}-z|dx.
\end{equation*}%
By H\"{o}lder's inequality together with the embedding $W_{0}^{1,p}(\Omega
)\hookrightarrow L^{pC}(\Omega )$, one can find a constant $c_{p}>0$ such
that 
\begin{equation*}
\Vert z_{n}-z\Vert _{1,p}^{p-1}\leq c_{p}\left( {\int_{\Omega }}\left\vert 
\mathbf{f}_{n}(x)-\mathbf{f}(x)\right\vert ^{p_{C}^{\prime }}dx\right)
^{1/p_{C}^{\prime }}.
\end{equation*}%
Thanks to Lemma \ref{bigf}, we conclude that $z_{n}\rightarrow z$ in $%
W_{0}^{1,p}(\Omega ).$ A quite similar argument provides $w_{n}\rightarrow w$
in $W_{0}^{1,q}(\Omega )$ and therefore, $\mathcal{T}(u_{n},v_{n})%
\rightarrow \mathcal{T}(u,v)$ in $X.$ This ends the proof.
\end{proof}

\bigskip

\begin{lemma}
\label{L2} Assume $($\textrm{H.1}$)$ and $($\textrm{H.2}$)$ hold. Then $%
\mathcal{T}$ is compact.
\end{lemma}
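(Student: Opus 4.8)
The plan is to show that $\mathcal{T}$ maps bounded sets of $X$ into relatively compact sets, exploiting the compact Besov-into-Sobolev embeddings from Lemma \ref{Binf}. First I would take a bounded sequence $(u_n,v_n)$ in $X$ and set $(z_n,w_n)=\mathcal{T}(u_n,v_n)$, so that $-\Delta_p z_n=\mathbf{f}_n$ and $-\Delta_q w_n=\mathbf{g}_n$ with $\mathbf{f}_n,\mathbf{g}_n$ as in \eqref{2}. Using $($\textrm{H.2}$)$, Remark \ref{R1}, Young's and Jensen's inequalities exactly as there, together with the boundedness of $(u_n,v_n)$ in $X$ and the Sobolev embeddings $W_0^{1,p}(\Omega)\hookrightarrow L^{pC}(\Omega)$, $W_0^{1,q}(\Omega)\hookrightarrow L^{qC}(\Omega)$, I get a uniform bound $\|\mathbf{f}_n\|_{p_C'}\le M$ and $\|\mathbf{g}_n\|_{q_C'}\le M$. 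Testing the equation for $z_n$ against $z_n$ itself and using H\"older's inequality with the embedding $W_0^{1,p}(\Omega)\hookrightarrow L^{pC}(\Omega)$ yields $\|z_n\|_{1,p}^{p-1}\le c_p\|\mathbf{f}_n\|_{p_C'}$, so $(z_n)$ is bounded in $W_0^{1,p}(\Omega)$; similarly $(w_n)$ is bounded in $W_0^{1,q}(\Omega)$.

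Next, the key regularity step: I would argue that $(z_n,w_n)$ is in fact bounded in the Besov space $B_\infty^{1+C,p}(\Omega)\times B_\infty^{1+C,q}(\Omega)$. Since the right-hand sides $\mathbf{f}_n$ lie in $L^{p_C'}(\Omega)$ with a uniform norm bound, standard regularity theory for the $p$-Laplacian (interpreted in the appropriate Besov/Sobolev scale, the reference being the Besov framework set up with \cite{s2}, \cite{gm}) gives a uniform bound on $z_n$ in $B_\infty^{1+C,p}(\Omega)$, and symmetrically for $w_n$ in $B_\infty^{1+C,q}(\Omega)$. Then Lemma \ref{Binf} furnishes the compact embeddings $B_\infty^{1+C,p}(\Omega)\hookrightarrow W_0^{1,p}(\Omega)$ and $B_\infty^{1+C,q}(\Omega)\hookrightarrow W_0^{1,q}(\Omega)$, so along a subsequence $z_n\to z$ in $W_0^{1,p}(\Omega)$ and $w_n\to w$ in $W_0^{1,q}(\Omega)$, i.e. $\mathcal{T}(u_n,v_n)$ has a convergent subsequence in $X$. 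This proves $\mathcal{T}$ is compact on bounded subsets of $X$.

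I expect the main obstacle to be the Besov-regularity estimate: justifying rigorously that a solution of $-\Delta_p z=h$ with $h\in L^{p_C'}(\Omega)$ belongs to $B_\infty^{1+C,p}(\Omega)$ with norm controlled by $\|h\|_{p_C'}$, and that this bound is uniform in $n$. This is where the interpolation-space machinery and the careful choice \eqref{10} of $C$ (ensuring $p_C'$ is in the right range relative to the Sobolev exponents) are essential; the condition $1<C<\min\{p^\star/p,q^\star/q\}$ is precisely what makes the relevant embeddings and regularity valid. The remaining pieces — the a priori $X$-bound on $(z_n,w_n)$ and the extraction of a convergent subsequence — are routine once this estimate is in hand, and combining Lemma \ref{L3} with the present lemma then allows Schaefer's fixed point Theorem \ref{TT} to be applied to $\mathcal{T}$.
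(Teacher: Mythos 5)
Your overall architecture coincides with the paper's: bound the images $(z_n,w_n)=\mathcal{T}(u_n,v_n)$ in a Besov space, invoke the compact embeddings of Lemma \ref{Binf}, and extract a strongly convergent subsequence in $X$. The preliminary steps (control of $(\mathbf{f}_n,\mathbf{g}_n)$ in $L^{p_C^{\prime}}(\Omega)\times L^{q_C^{\prime}}(\Omega)$ via the compact embeddings $W_0^{1,p}(\Omega)\hookrightarrow L^{pC}(\Omega)$, $W_0^{1,q}(\Omega)\hookrightarrow L^{qC}(\Omega)$, and the a priori bound on $(z_n,w_n)$ in $X$ obtained by testing the equations) are consistent with what the paper does.

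However, the step you yourself flag as ``the main obstacle'' is precisely the content of the paper's proof, and you leave it unproved. There is no off-the-shelf regularity theorem sending $-\Delta_p z=h$ with $h\in L^{p_C^{\prime}}(\Omega)$ into $B_\infty^{1+C,p}(\Omega)$, and the smoothness gain in fact has nothing to do with the exponent $C$ of \eqref{10}. The paper obtains the Besov bound from Simon's translation estimates (\cite{s1}, \cite{s2}): for $h\in[0,1]$ and $\theta\in\mathcal{D}(\Omega)^N$ one estimates $\Vert z_{n_k}\circ e^{h\theta}-z_{n_k}\Vert_{1,p}$ by $c_2h^{1/(p-1)}$ if $p\geq 2$ and by $c_2h^{p-1}(1+h^{2-p})$ if $p<2$, where the uniformity of $c_2$ in $k$ and $h$ rests on the convergence $\mathbf{f}_{n_k}\to\mathbf{f}_\infty$ in $L^{p_C^{\prime}}(\Omega)$ (not merely boundedness), itself a consequence of Rellich--Kondrachov, a.e.\ convergence and dominated convergence. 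This places $z_{n_k}$ in $B_\infty^{1+\frac{1}{p-1},p}(\Omega)$ for $p\geq2$, resp.\ $B_\infty^{1+(p-1),p}(\Omega)$ for $p<2$: the smoothness index is dictated by the H\"older-type continuity of $(-\Delta_p)^{-1}$, i.e.\ by $p$ alone, while the role of $C$ and \eqref{10} is only to secure the $L^{p_C^{\prime}}$-convergence of the right-hand sides. So your proposal has the right skeleton but is missing its central estimate and misattributes the mechanism behind it; to complete it you must carry out the $e^{h\theta}$-translation argument (or an equivalent quantitative continuity estimate for the inverse $p$-Laplacian on the Besov scale).
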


\begin{proof}
For a bounded sequence $(u_{n},v_{n})_{n}$ in $X$ and $\mathbf{f}_{n},%
\mathbf{g}_{n}$ defined in (\ref{2}), let us show that there exists a
subsequence $(u_{n_{k}},v_{n_{k}})_{n_{k}}$ such that 
\begin{equation*}
|\mathbf{f}_{n_{k}}(x)|,|\mathbf{g}_{n_{k}}(x)|\leq const.,\text{ for a.e}%
\mathit{.\ }x\in \Omega ,\forall n\in \mathbb{N},.
\end{equation*}
From (\ref{10}), the embeddings $W_{0}^{1,p}(\Omega )\hookrightarrow
L^{pC}(\Omega )$ and $W_{0}^{1,q}(\Omega )\hookrightarrow L^{qC}(\Omega )$
are compact. By Rellich-Kondrachov compactness Theorem, along a relabeled
subsequence, $(u_{n},v_{n})_{n}$ converges strongly in $L^{pC}(\Omega
)\times L^{qC}(\Omega )$. Consequently, we can extract subsequence $%
(u_{n_{k}},v_{n_{k}})\rightarrow (\tilde{u},\tilde{v})$ a.e. in $\Omega .$
Exploiting the continuity of $f$ and $g$, we derive that $\mathbf{f}_{n_{k}}$
and $\mathbf{g}_{n_{k}}$ converge to $\tilde{\mathbf{f}}(x)=f(x,\tilde{u},%
\tilde{v})$ and $\tilde{\mathbf{g}}(x)=g(x,\tilde{u},\tilde{v})$ \textit{a.e.%
} in $\Omega $, as well as, $\mathbf{f}_{n_{k}}$, $\mathbf{g}_{n_{k}}$ are
bounded in $L^{p_{C}^{\prime }}(\Omega )$ and $L^{q_{C}^{\prime }}(\Omega ),$
respectively.

Set $(z_{n_{k}},w_{n_{k}})=\mathcal{T}(u_{n_{k}},v_{n_{k}})$. We claim that $%
z_{n_{k}}$ is bounded in the Besov space $B_{\infty }^{1+\frac{1}{p-1}%
,p}(\Omega )$ (resp. $B_{\infty }^{1+(p-1),p}(\Omega )$) if $p\geq 2$ (resp. 
$p<2$). Indeed, we will apply \cite{s2} (precisely, (14), (15) in Lemma 1,
and (22), (25) in the proof of Theorem 1) to $z_{n_{k}},$ $\mathbf{f}_{n_{k}}
$ and $\mathbf{f}_{\infty }={\lim }\mathbf{f}_{n_{k}}$ a.e in $\Omega $.

Let $h$ in $[0,1]$ and $\theta $ in $\mathcal{D}(\Omega )^{N}$. By using 
\cite[(2.8) in Lemma 1.1]{s1}, there exists a positive constant $c,$
independent of $n_{k}$ and $h,$ such that%
\begin{equation*}
\left\Vert (\mathbf{f}_{n_{k}}-\mathbf{f}_{\infty })\circ {e^{h\theta }}%
\right\Vert _{p_{C}^{\prime }}\leq c\text{ }{\sup_{\Omega }}\left\vert
Jac(e^{-h\theta })\right\vert ^{1/p_{C}^{\prime }}\left\Vert \mathbf{f}%
_{n_{k}}-\mathbf{f}_{\infty }\right\Vert _{p_{C}^{\prime }},
\end{equation*}%
where $Jac(e^{-h\theta })$ denotes the jacobian of the map $\theta \mapsto
e^{-\theta }.$ It follows that 
\begin{equation}
\left\Vert (\mathbf{f}_{n_{k}}-\mathbf{f}_{\infty })\circ {e^{h\theta }}-(%
\mathbf{f}_{n_{k}}-\mathbf{f}_{\infty })\right\Vert _{p_{C}^{\prime }}\leq
\left( c\text{ }{\sup_{\Omega }}\left\vert Jac(e^{-h\theta })\right\vert
^{1/p_{C}^{\prime }}+1\right) \left\Vert \mathbf{f}_{n_{k}}-\mathbf{f}%
_{\infty }\right\Vert _{p_{C}^{\prime }}.  \label{jac}
\end{equation}%
Consequently, for $k$ sufficiently large, $\left\Vert \mathbf{f}_{n_{k}}-%
\mathbf{f}_{\infty }\right\Vert _{p_{C}^{\prime }}$ tends to $0.$ So, there
exists a constant $c_{2}>0,$ independent of $h$ and $n,$ such that for $%
0\leq h\leq 1,$ one has%
\begin{equation*}
\Vert z_{n_{k}}\circ {e^{h\theta }}-z_{n_{k}}\Vert _{1,p}\leq \left\{ 
\begin{array}{ll}
c_{2}h^{1/(p-1)} & \hbox{ if }p\geq 2, \\ 
c_{2}h^{p-1}(1+h^{2-p}) & \hbox{ if }p<2.%
\end{array}%
\right.
\end{equation*}%
Therefore 
\begin{equation*}
{\sup_{0\leq h\leq 1}}\dfrac{\Vert z_{n_{k}}\circ {e^{h\theta }}%
-z_{n_{k}}\Vert _{1,p}}{h^{1/(p-1)}}\leq c_{2},\,\,\,\,\hbox{ for }p\geq 2
\end{equation*}%
and 
\begin{equation*}
{\sup_{0\leq h\leq 1}}\dfrac{\Vert z_{n_{k}}\circ {e^{h\theta }}%
-z_{n_{k}}\Vert _{1,p}}{h^{p-1}}\leq 2c_{2},\,\,\,\,\hbox{ for }p<2,
\end{equation*}%
which clearly means that $z_{n_{k}}$ is bounded in the Besov space $%
B_{\infty }^{1+\frac{1}{p-1},p}(\Omega )$ (resp. $B_{\infty
}^{1+(p-1),p}(\Omega )$) if $p\geq 2$ (resp. $p<2$). This proves the claim.

Arguing similarly we infer that $w_{n_{k}}$ is bounded in the Besov space $%
B_{\infty }^{1+\frac{1}{q-1},q}(\Omega )$ (resp. $B_{\infty
}^{1+(q-1),q}(\Omega )$) if $q\geq 2$ (resp. $q<2$).

Finally, thanks to Lemma \ref{Binf}, one can extract a subsequence still
denoted by $(z_{n},w_{n})$ which converges strongly in $X.$ Thus, the
operator $\mathcal{T}$ is compact, ending the proof of Lemma.
\end{proof}

\bigskip

Next, to implement Schaefer's Theorem, let us introduce, for $\tau \in
(0,1], $ the auxiliary problem%
\begin{equation*}
(\mathcal{P}_{\tau })\mathcal{\qquad }\left\{ 
\begin{array}{ll}
-\Delta _{p}{u}=\tau {f}(x,{u},{v}) & \hbox{ in }\Omega \\ 
-\Delta _{q}{v}=\tau {g(x,{u},{v})} & \hbox{ in }\Omega \\ 
{u}={v}=0 & \hbox{ on }\partial \Omega .%
\end{array}%
\right.
\end{equation*}%
According to the definition of the operator $\mathcal{T}$, system $(\mathcal{%
P}_{\tau })$ may be formulated as $\tau \mathcal{T}{(u,v)}=(u,v).$

\begin{proposition}
\label{Tta} Assume $($\textrm{H.1}$)$ and $($\textrm{H.2}$)$ hold. Given $%
\tau \in (0,1],$ let $(u_{\tau },v_{\tau })$ be such that $(u_{\tau
},v_{\tau })=\tau \mathcal{T}(u_{\tau },v_{\tau }).$ Then there is a
constant $\Theta >0,$ independent of $\tau $, such that $\Vert (u_{\tau
},v_{\tau })\Vert _{X}\leq \Theta $. Moreover, one can find a constant $%
\varepsilon _{0}>0$ such that system $(\mathcal{P}_{\tau })$ has no
solutions on $\partial \mathcal{O}$, where 
\begin{equation*}
\mathcal{O}=\left\{ (u,v)\in X:\,{\varepsilon _{0}}/2<\Vert (u,v)\Vert
_{X}<2\Theta \right\} .
\end{equation*}
\end{proposition}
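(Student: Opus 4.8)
The plan is to derive the uniform a priori bound $\Theta$ first, and then use it together with the structure of $(\mathcal{P}_\tau)$ near the origin to produce the lower bound $\varepsilon_0$. For the upper bound, suppose $(u_\tau,v_\tau)=\tau\mathcal{T}(u_\tau,v_\tau)$. Then $(u_\tau,v_\tau)$ solves $(\mathcal{P}_\tau)$, so testing the first equation with $u_\tau$ and the second with $v_\tau$ gives
\begin{equation*}
\Vert u_\tau\Vert_{1,p}^p=\tau\int_\Omega u_\tau f(x,u_\tau,v_\tau)\,dx,\qquad \Vert v_\tau\Vert_{1,q}^q=\tau\int_\Omega v_\tau g(x,u_\tau,v_\tau)\,dx.
\end{equation*}
Since $\tau\in(0,1]$, adding these and invoking the growth estimate of Remark \ref{R1} (which already packages together $($\textrm{H.2}$)$, Young, Jensen and Poincar\'e) yields
\begin{equation*}
\Vert u_\tau\Vert_{1,p}^p+\Vert v_\tau\Vert_{1,q}^q\leq \mathrm{const.}\left(\Vert u_\tau\Vert_{1,p}^{pC}+\Vert v_\tau\Vert_{1,q}^{qC}+1\right).
\end{equation*}
At this point the crucial observation is that the argument must instead be run on the operator bound itself: writing $(z_\tau,w_\tau)=\mathcal{T}(u_\tau,v_\tau)$, so that $u_\tau=\tau^{1/(p-1)} \widetilde z_\tau$ up to the usual scaling — more cleanly, testing $(\mathcal{P}_{z,w})$ for the pair $(z_\tau,w_\tau)$ with $(z_\tau,w_\tau)$ and using H\"older and the embeddings $W_0^{1,p}(\Omega)\hookrightarrow L^{pC}(\Omega)$, $W_0^{1,q}(\Omega)\hookrightarrow L^{qC}(\Omega)$ exactly as in the proof of Lemma \ref{L3} — gives $\Vert z_\tau\Vert_{1,p}^{p-1}\le c_p\Vert\mathbf f_\tau\Vert_{p_C'}$ and $\Vert w_\tau\Vert_{1,q}^{q-1}\le c_q\Vert\mathbf g_\tau\Vert_{q_C'}$, and then $($\textrm{H.1}$)$–$($\textrm{H.2}$)$ bound the right-hand sides by a power of $\Vert(u_\tau,v_\tau)\Vert_X$ strictly below $1$ in each component. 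The upshot is a scalar inequality of the form $R\le \mathrm{const.}(R^{\theta}+1)$ for $R=\Vert(u_\tau,v_\tau)\Vert_X$ with $\theta<1$, which forces $R\le\Theta$ for a constant $\Theta$ depending only on $C$, the embedding constants, $\Vert k_{p,q}\Vert_\infty$ and $\Omega$, hence independent of $\tau$.

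For the lower bound, I would argue by contradiction using the fact that near the origin the left-hand side of $(\mathcal{P}_\tau)$ dominates. Concretely, testing as above and using the $|s|^{pC}+|t|^{qC}$ branch of the ``$\vee$'' in $($\textrm{H.2}$)$ (the branch that is active when $\Vert(u,v)\Vert_X$ is small, since $pC>p$ and $qC>q$), one gets
\begin{equation*}
\Vert u_\tau\Vert_{1,p}^p+\Vert v_\tau\Vert_{1,q}^q\leq \tau\,\mathrm{const.}\left(\Vert u_\tau\Vert_{1,p}^{pC}+\Vert v_\tau\Vert_{1,q}^{qC}\right)\le \mathrm{const.}\,\Vert(u_\tau,v_\tau)\Vert_X^{\min\{pC,qC\}},
\end{equation*}
using $\tau\le 1$. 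If $(u_\tau,v_\tau)\neq(0,0)$, divide by $\Vert(u_\tau,v_\tau)\Vert_X^{\max\{p,q\}}$ (or handle the two summands separately) to obtain $\Vert(u_\tau,v_\tau)\Vert_X\ge \varepsilon_0$ for an explicit $\varepsilon_0>0$ independent of $\tau$, because $\min\{pC,qC\}>\max\{p,q\}$ would need checking — more safely, one keeps the two scales separate and concludes that either $\Vert u_\tau\Vert_{1,p}$ or $\Vert v_\tau\Vert_{1,q}$ is bounded below, which is enough to keep $(u_\tau,v_\tau)$ off a small sphere. Thus any nonzero solution of $(\mathcal{P}_\tau)$ satisfies $\varepsilon_0\le\Vert(u_\tau,v_\tau)\Vert_X\le\Theta$, so in particular no solution lies on the spheres $\Vert(u,v)\Vert_X=\varepsilon_0/2$ or $\Vert(u,v)\Vert_X=2\Theta$; since $\partial\mathcal O$ is contained in the union of these two spheres, $(\mathcal{P}_\tau)$ has no solution on $\partial\mathcal O$.

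The main obstacle I anticipate is making the lower bound genuinely uniform in $\tau$ while extracting it from the ``min/max'' growth condition $($\textrm{H.2}$)$: one has to be careful that for $\Vert(u,v)\Vert_X$ small the relevant term is $|s|^{pC}+|t|^{qC}$ and not the product $|s|^{\alpha+1}|t|^{\beta+1}$, and to verify — via $($\ref{10}$)$, $($\ref{11}$)$ and Young/Jensen as in Remark \ref{R1} — that the product term is itself controlled by $\Vert u\Vert_p^p+\Vert v\Vert_q^q$, which near the origin is of strictly lower order than any genuinely superlinear lower bound one hopes to get. A secondary but routine point is the bookkeeping of the exponent $p-1$ versus $q-1$ (the cases $p,q\ge 2$ and $p,q<2$) when passing from the tested identity to a power inequality in $R$; this is exactly the split already carried out in the proof of Lemma \ref{L2} and introduces no new idea. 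Once these are in place the constants $\Theta$ and $\varepsilon_0$ are read off directly, and the no-solutions-on-$\partial\mathcal O$ conclusion is immediate from $\partial\mathcal O\subset\{\Vert(u,v)\Vert_X=\varepsilon_0/2\}\cup\{\Vert(u,v)\Vert_X=2\Theta\}$.
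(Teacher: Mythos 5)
Your lower-bound argument (the constant $\varepsilon _{0}$) is essentially the paper's: one tests with $(u_{\tau },v_{\tau })$, invokes the $|s|^{pC}+|t|^{qC}$ branch of $($\textrm{H.2}$)$ together with the embeddings $W_{0}^{1,p}(\Omega )\hookrightarrow L^{pC}(\Omega )$ and $W_{0}^{1,q}(\Omega )\hookrightarrow L^{qC}(\Omega )$, and uses $pC>p$, $qC>q$ to exclude small nonzero solutions; your ``keep the two components separate'' fallback is exactly how the paper resolves the $\min \{pC,qC\}$ versus $\max \{p,q\}$ issue, so that part is sound.

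The upper bound is where your proposal breaks down. The step asserting that $($\textrm{H.1}$)$--$($\textrm{H.2}$)$ bound $\Vert \mathbf{f}_{\tau }\Vert _{p_{C}^{\prime }}$ by a power of $\Vert (u_{\tau },v_{\tau })\Vert _{X}$ strictly below $p-1$ (equivalently, the scalar inequality $R\leq \mathrm{const.}(R^{\theta }+1)$ with $\theta <1$) is not available. $($\textrm{H.1}$)$ is purely qualitative, and $($\textrm{H.2}$)$ is exactly critical with respect to the relevant scaling: the constraint $\frac{\alpha +1}{p}+\frac{\beta +1}{q}=1$ makes $\int_{\Omega }u_{\tau }f\,dx$ of the same order as $\Vert u_{\tau }\Vert _{p}^{p}+\Vert v_{\tau }\Vert _{q}^{q}$, i.e. the same order as the tested left-hand side, and correspondingly $\Vert f(\cdot ,u,v)\Vert _{p_{C}^{\prime }}$ is of order $\Vert (u,v)\Vert _{X}^{p-1}$ with no room to spare. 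Concretely, for $p=q=2$ and $\alpha =\beta =0$ one only gets $\Vert \mathbf{f}_{\tau }\Vert _{p_{C}^{\prime }}\leq c\Vert v_{\tau }\Vert _{1,2}$ and hence $R\leq cR$, from which nothing follows; the other branch $|s|^{pC}+|t|^{qC}$ is worse for an upper bound since $C>1$. (There is also the unaddressed point that $($\textrm{H.2}$)$ controls $|sf|$, not $|f|$, so passing to $\Vert f\Vert _{p_{C}^{\prime }}$ requires dividing by $|s|$.) The paper's mechanism is genuinely different: it divides the tested identity by $\Vert u_{\tau }\Vert _{1,p}^{p}+\Vert v_{\tau }\Vert _{1,q}^{q}$ and compares the resulting Rayleigh-type quotient with the first eigenvalue $\lambda _{p,q}$ of the coupled system, arriving at $1\leq \tau k_{p,q}/\lambda _{p,q}$, so the a priori bound comes from a constant (eigenvalue) comparison rather than from an exponent gap. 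To repair your argument you would need to adopt that eigenvalue comparison (or some other source of coercivity); a sublinear power estimate cannot be extracted from the hypotheses as stated.
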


\begin{proof}
Arguing by contradiction, let $(u_{\tau },v_{\tau })$ be an unbounded
solution of $(\mathcal{P}_{\tau })$ in $X.$ Multiplying the first and the
second equation in $(\mathcal{P}_{\tau })$ by $\dfrac{u_{\tau }}{\Vert
u_{\tau }\Vert _{1,p}^{p}}$ and $\dfrac{v_{\tau }}{\Vert v_{\tau }\Vert
_{1,q}^{q}}$, respectively, one has%
\begin{equation*}
\Vert u_{\tau }\Vert _{1,p}^{p}=\tau {\int_{\Omega }}u_{\tau }f(x,u_{\tau
}(x),v_{\tau }(x))dx\text{ \ and \ }\Vert v_{\tau }\Vert _{1,q}^{q}=\tau {%
\int_{\Omega }}v_{\tau }g(x,u_{\tau }(x),v_{\tau }(x))dx.
\end{equation*}%
Employing $($\textrm{H.2}$)$ it follows that 
\begin{equation}
\begin{array}{l}
1=\tau \dfrac{{\int_{\Omega }}u_{\tau }f(x,u_{\tau },v_{\tau })+v_{\tau
}g(x,u_{\tau },v_{\tau })dx}{\Vert u_{\tau }\Vert _{1,p}^{p}+\Vert v_{\tau
}\Vert _{1,q}^{q}} \\ 
\leq \tau {k_{p,q}}\dfrac{{\int_{\Omega }}u_{\tau }^{\alpha +1}v_{\tau
}^{\beta +1}dx}{\dfrac{\alpha +1}{p}\Vert u_{\tau }\Vert _{1,p}^{p}+\dfrac{%
\beta +1}{q}\Vert v_{\tau }\Vert _{1,q}^{q}}\leq \tau \dfrac{k_{p,q}}{%
\lambda _{p,q}},%
\end{array}
\label{3}
\end{equation}%
where $\lambda _{p,q}$ is the first eigenvalue for a nonlinear elliptic
system with Dirichlet boundary condition that can be characterized by%
\begin{equation*}
\begin{array}{c}
\lambda _{p,q}=\inf_{(u_{\tau },v_{\tau })\in X\backslash \{0\}}\frac{\frac{1%
}{p}\Vert u_{\tau }\Vert _{1,p}^{p}+\frac{1}{q}\Vert v_{\tau }\Vert
_{1,q}^{q}}{{\int_{\Omega }}u_{\tau }^{\alpha +1}v_{\tau }^{\beta +1}dx}%
\end{array}%
\end{equation*}%
(see \cite{dtln}). Then, taking $\Vert u_{\tau }\Vert _{1,p}^{p}+\Vert
v_{\tau }\Vert _{1,q}^{q}$ large enough leads to $\tau \dfrac{k_{p,q}}{%
\lambda _{p,q}}\rightarrow 0,$ which contradicts (\ref{3}). Consequently,
there exists a constant $\Theta >0$ such that all solutions $(u_{\tau
},v_{\tau })$ of the equation $(u,v)=\tau {T}(u,v),$ with $\tau \in (0,1],$
verify 
\begin{equation}
\Vert (u_{\tau },v_{\tau })\Vert _{X}\leq \Theta .  \label{4}
\end{equation}%
Now, we show the second part of the Proposition \ref{Tta}. Set $\Omega
_{2}=\Omega \backslash \Omega _{1}$ with%
\begin{equation*}
\Omega _{1}=\left\{ x\in \Omega :|u_{\tau }(x)|^{pC}+|v_{\tau
}|^{qC}(x)<|u_{\tau }(x)|^{\alpha +1}|v_{\tau }(x)|^{\beta +1}\right\} .
\end{equation*}%
Then 
\begin{equation*}
\begin{array}{l}
\Vert u_{\tau }\Vert _{1,p}^{p}=\tau {\int_{\Omega }}u_{\tau }f(u_{\tau
},v_{\tau })dx \\ 
\leq K\left[ {\int_{\Omega _{1}}}\left( |u_{\tau }|^{pC}+|v_{\tau
}|^{qC}\right) dx+{\int_{\Omega _{2}}}|u_{\tau }|^{\alpha +1}|v_{\tau
}|^{\beta +1}dx\right]  \\ 
\leq K{\int_{\Omega }}\left( |u_{\tau }|^{pC}+|v_{\tau }|^{qC}\right) dx\leq
K(\Vert u_{\tau }\Vert _{pC}^{pC}+\Vert v_{\tau }\Vert _{qC}^{qC}) \\ 
\leq K(C_{p}^{pC}\Vert u_{\tau }\Vert _{1,p}^{pC}+C_{q}^{qC}\Vert v_{\tau
}\Vert _{1,q}^{qC}),%
\end{array}%
\end{equation*}%
where $C_{p}$ and $C_{q}$ are the best constant in the continuous embedding $%
W_{0}^{1,p}(\Omega )\hookrightarrow L^{pC}(\Omega )$ and $W_{0}^{1,q}(\Omega
)\hookrightarrow L^{qC}(\Omega )$. Arguing similarly with the component $%
v_{\tau },$ one gets 
\begin{equation*}
\Vert v_{\tau }\Vert _{1,q}^{q}\leq K(C_{p}^{pC}\Vert u_{\tau }\Vert
_{1,p}^{pC}+C_{q}^{qC}\Vert v_{\tau }\Vert _{1,q}^{qC}).
\end{equation*}%
Then, for any $\tau \in \lbrack 0,1[$, it follows that%
\begin{equation*}
0\leq \Vert u_{\tau }\Vert _{1,p}^{p}(\Vert u_{\tau }\Vert _{1,p}^{p(C-1)}-%
\dfrac{1}{2KC_{p}^{pC}})+\Vert v_{\tau }\Vert _{1,q}^{p}(\Vert v_{\tau
}\Vert _{1,q}^{q(C-1)}-\dfrac{1}{2KC_{q}^{qC}}).
\end{equation*}%
Setting $\varepsilon _{0}={(2K)^{-1}}{\min \{C_{p}^{p(1-C)},C_{q}^{q(1-C)}\}}
$ one derives 
\begin{equation}
\Vert (u_{\tau },v_{\tau })\Vert _{X}\geq \varepsilon _{0}.  \label{5}
\end{equation}%
Consequently, according to (\ref{4}) and (\ref{5}), it is readily seen that
the solutions set of the equation $(u_{\tau },v_{\tau })=\tau {T}(u_{\tau
},v_{\tau })$ verifying $\Vert (u_{\tau },v_{\tau })\Vert _{X}=\varepsilon
_{0}/2$ or $\Vert (u_{\tau },v_{\tau })\Vert _{X}=2\Theta $ is empty.
Namely, system $(\mathcal{P}_{\tau })$ doesn't admit a solution on the
boundary $\partial \mathcal{O}$ for all $\tau \in (0,1]$. This ends the
proof.
\end{proof}

Now we are ready to prove our existence result.

\begin{proof}[\textbf{Proof of Theorem \protect\ref{T1}}]
The proof is a consequence of Lemmas \ref{L3} and \ref{L2} together with
Proposition \ref{Tta}. Hence, owing to Theorem \ref{TT} one concludes that
system (\ref{p}) admits at least a solution $(u^{\star },v^{\star })$ in $X$
satisfying $\varepsilon _{0}\leq \Vert (u^{\star },v^{\star })\Vert _{X}\leq
\Theta $ for certain\ positive constants $\varepsilon _{0}$ and $\Theta $.
Moreover, regularity results due to Tolksdorf \cite{tldf} together with
Theorem \ref{T3} ensure that $(u^{\star },v^{\star })\in C^{1,\sigma }(%
\overline{\Omega })\times C^{1,\sigma }(\overline{\Omega })$ for certain $%
\sigma \in (0,1).$
\end{proof}

\section{Positivity}

\label{S2}

In this section, we show the positivity of the obtained solution $(u^{\star
},v^{\star })$ stated in Theorem \ref{T2}. Our approach is chiefly based on
comparison arguments. To do so, let us recall the following results due to
Pohozaev in \cite[Theorems 5.4.2, 5.5.2 and 5.6.1]{p} (see also \cite[
Theorems 3.4.2, 3.5.2 and 3.6.1]{ch}) for the Dirichlet problem 
\begin{equation*}
(\mathcal{P}_{\lambda })\mathcal{\qquad }\left\{ 
\begin{array}{ll}
-\Delta _{p}{u}=\lambda b(x)|u|^{p-2}u+a(x)|u|^{q-2}u & \hbox{ in }\Omega 
\\ 
{u}=0 & \hbox{ on }\partial \Omega ,%
\end{array}%
\right. 
\end{equation*}%
where $a,b\in L^{\infty }(\Omega )$ and $1<p<q<p^{\star }$.

\begin{proposition}
\label{P1}Let $\lambda _{1}$ be the first eigenvalue of $-\Delta _{p}$ and
let $\phi _{1}$ the corresponding eigenfunction.

\begin{description}
\item[$(\mathrm{1})$] Assume $0\leq \lambda <\lambda _{1}$. Then the problem 
$(\mathcal{P}_{\lambda })$ has at least one positive weak solution $u\in
W_{0}^{1,p}(\Omega )\cap L^{\infty }(\Omega ).$ Moreover, there exists $%
\sigma \in (0,1)$ such that $u\in C_{loc}^{1,\sigma }(\Omega ).$

\item[$(\mathrm{2})$] Assume $0\leq \lambda =\lambda _{1}$ and ${%
\int_{\Omega }}a(x)\phi _{1}^{q}dx<0.$ Then, the problem $(\mathcal{P}%
_{\lambda })$ has at least one positive weak solution $u\in
W_{0}^{1,p}(\Omega )\cap L^{\infty }(\Omega ).$ Moreover, $u\in
C_{loc}^{1,\sigma }(\Omega )$ for certain $\sigma \in (0,1).$

\item[$(\mathrm{3})$] Assume $0\leq \lambda _{1}<\lambda $ and ${%
\int_{\Omega }}a(x)\phi _{1}^{q}dx<0$. Then, there exists $\varepsilon >0$
such that for $\lambda _{1}<\lambda <\lambda _{1}+\varepsilon ,$ problem $(%
\mathcal{P}_{\lambda })$ admits two positive weak solutions in $%
W_{0}^{1,p}(\Omega )\cap L^{\infty }(\Omega )$ and each of them belongs to $%
C_{loc}^{1,\sigma }(\Omega ),$ with $\sigma \in (0,1).$
\end{description}
\end{proposition}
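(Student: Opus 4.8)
This proposition records three theorems of Pohozaev, and the natural self-contained route --- the one he takes --- is his \emph{fibering method} combined with a Nehari-manifold analysis; I sketch that plan. First I would attach to $(\mathcal{P}_{\lambda})$ the $C^{1}$ energy functional
\[
E_{\lambda}(u)=\frac{1}{p}J(u)-\frac{1}{q}K(u),\qquad J(u)=\int_{\Omega}|\nabla u|^{p}\,dx-\lambda\int_{\Omega}b(x)|u|^{p}\,dx,\quad K(u)=\int_{\Omega}a(x)|u|^{q}\,dx,
\]
on $W_{0}^{1,p}(\Omega)$, whose critical points are exactly the weak solutions, and then fiber $u=tw$, $t>0$: along a fiber $E_{\lambda}(tw)=\frac{t^{p}}{p}J(w)-\frac{t^{q}}{q}K(w)$, and since $q>p$ this scalar map, whenever $J(w)>0$ and $K(w)>0$, has a unique positive critical point $t_{\ast}(w)=(J(w)/K(w))^{1/(q-p)}$, which is its global maximum, and $t_{\ast}(w)w$ lies on the Nehari set $\mathcal{N}=\{u\neq0:J(u)=K(u)\}$, on which $E_{\lambda}(u)=(\frac{1}{p}-\frac{1}{q})J(u)$. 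The common core of all three parts is: minimize $E_{\lambda}$ over $\mathcal{N}$; promote the minimizer to a weak solution by a Lagrange-multiplier argument (using $\langle G'(u),u\rangle=(p-q)J(u)\neq0$ for $G:=J-K$, which kills the multiplier); replace $u$ by $|u|$, which leaves $E_{\lambda}$, $J$, $K$ unchanged, so that $u\ge0$; and finally run Moser iteration --- admissible since $q<p^{\star}$ --- for the $L^{\infty}$ bound, invoke DiBenedetto--Tolksdorf regularity \cite{tldf} for $C^{1,\sigma}_{\mathrm{loc}}(\Omega)$, and the strong maximum principle for strict positivity.

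\textbf{Part (1).} For $0\le\lambda<\lambda_{1}$ the variational characterization of $\lambda_{1}$ gives $J(u)\ge(1-\lambda/\lambda_{1})\Vert u\Vert_{1,p}^{p}>0$ for $u\neq0$, so $E_{\lambda}$ is coercive on $\mathcal{N}$; moreover $J(u)=K(u)$ together with $W_{0}^{1,p}(\Omega)\hookrightarrow L^{q}(\Omega)$ ($q<p^{\star}$) bounds $\Vert\cdot\Vert_{1,p}$ below on $\mathcal{N}$, so $m:=\inf_{\mathcal{N}}E_{\lambda}>0$. A minimizing sequence is bounded; along a subsequence $u_{n}\rightharpoonup u$ in $W_{0}^{1,p}$ and, by the \emph{compact} embedding into $L^{q}$, $K(u_{n})\to K(u)=\lim J(u_{n})\ge c_{0}>0$, so $u\neq0$; weak lower semicontinuity of $J$ gives $K(u)\ge J(u)$, the fibre-maximum property excludes $K(u)>J(u)$, hence $u\in\mathcal{N}$ attains $m$ and the common core finishes the proof.

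\textbf{Part (2).} When $\lambda=\lambda_{1}$ one still has $J\ge0$, but $J$ degenerates along $\phi_{1}$ and coercivity on $\mathcal{N}$ is lost; here the sign condition $\int_{\Omega}a\phi_{1}^{q}\,dx<0$ is exactly what restores compactness. If a minimizing sequence $(u_{n})\subset\mathcal{N}$ had $\Vert u_{n}\Vert_{1,p}\to\infty$ (or a vanishing weak limit, in the bounded case), then normalizing $w_{n}=u_{n}/\Vert u_{n}\Vert_{1,p}$ one finds $J(w_{n})\to0$, so by simplicity and isolation of $\lambda_{1}$ for $-\Delta_{p}$ the only weak limit with $J(w)=0$ is a nonzero multiple of $\phi_{1}$, the convergence is strong in $W_{0}^{1,p}$ (convergence of norms in a uniformly convex space), and hence $K(w_{n})$ tends to a negative limit (a positive multiple of $\int_{\Omega}a\phi_{1}^{q}\,dx$); but $K(w_{n})$ carries the sign of $J(w_{n})\ge0$ on $\mathcal{N}$ --- a contradiction. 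Thus minimizing sequences stay bounded and away from $0$, $m>0$, and the argument of Part (1) applies verbatim.

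\textbf{Part (3).} For $\lambda_{1}<\lambda<\lambda_{1}+\varepsilon$ the form $J$ becomes indefinite, $J(\phi_{1})<0$, so $0$ is no longer a local minimum of $E_{\lambda}$: along the positive eigenfunction $E_{\lambda}(t\phi_{1})=-c_{1}(\lambda)t^{p}+c_{2}t^{q}$ with $c_{1}(\lambda)\to0^{+}$ as $\lambda\to\lambda_{1}^{+}$ and $c_{2}=-\frac{1}{q}\int_{\Omega}a\phi_{1}^{q}\,dx>0$, a curve that dips below $0$ and then rises. I would show that for $\varepsilon$ small this persists in the full space, producing a nonzero \emph{local} minimizer of $E_{\lambda}$ (a first positive solution), and then run a mountain-pass scheme over it --- the Palais--Smale condition being available since $q<p^{\star}$ --- to get a second critical level, hence a second positive solution; positivity and $C^{1,\sigma}_{\mathrm{loc}}$ regularity follow once more from the common core. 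This third case is the main obstacle: it is a genuine bifurcation-from-$\lambda_{1}$ situation, and the delicate points --- the loss of mountain-pass geometry at the origin, locating a local minimizer away from $0$ uniformly in $\lambda$ near $\lambda_{1}$, and verifying that the two solutions are genuinely distinct --- are precisely where Pohozaev's detailed analysis \cite{p} is needed.
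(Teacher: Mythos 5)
The paper does not actually prove this proposition: it is stated as a recalled result of Pohozaev, with the proof delegated entirely to \cite[Theorems 5.4.2, 5.5.2 and 5.6.1]{p} (see also \cite{ch}). There is therefore no in-paper argument to compare yours against; what can be said is that your fibering/Nehari sketch is precisely the method of the cited source, and for parts $(\mathrm{1})$ and $(\mathrm{2})$ your outline is essentially complete and correct: the coercivity of $J$ on the Nehari set when $\lambda<\lambda_{1}$, the normalization-and-compactness argument showing that the sign condition $\int_{\Omega}a\phi_{1}^{q}dx<0$ restores boundedness of minimizing sequences when $\lambda=\lambda_{1}$, the Lagrange-multiplier cancellation via $\langle G'(u),u\rangle=(p-q)J(u)\neq0$, and the passage to $|u|$ are all the right steps. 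Two points deserve flagging. First, the construction presupposes that the Nehari set is nonempty, i.e.\ that $K(w)=\int_{\Omega}a(x)|w|^{q}dx>0$ for some $w$; this forces the implicit hypothesis $a^{+}\not\equiv0$, which the proposition as stated omits but which the paper does impose where it actually invokes the result (Lemma \ref{L4}), and your coercivity estimate $J(u)\geq(1-\lambda/\lambda_{1})\Vert u\Vert_{1,p}^{p}$ tacitly reads $\lambda_{1}$ as the eigenvalue \emph{weighted by} $b$, which is likewise how the paper uses it (the quantity $\lambda_{b_{p}}$). Second, part $(\mathrm{3})$ is where the real content of Pohozaev's theorem lies, and your treatment of it is a description of the expected geometry (a local minimizer away from the origin for $\lambda$ slightly above $\lambda_{1}$, followed by a mountain-pass level) rather than a proof; you acknowledge this yourself. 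Since the paper offers nothing for $(\mathrm{3})$ beyond the citation, this is not a defect relative to the paper, but the proposal as written establishes only $(\mathrm{1})$ and $(\mathrm{2})$ and leaves $(\mathrm{3})$ resting on \cite{p}.
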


\mathstrut 

We also recall the following definition of conditional critical point.

\begin{definition}
\label{ccpoint}(\cite{ch}, \cite{krv}) Let $\Sigma $ be a real Banach space
and let $h:\Sigma \rightarrow \mathbb{R}$ be a functional such that $h$ is
of class $C^{1}(\Sigma \backslash \{0\})$ and $\tilde{h}:\mathbb{R}%
\backslash \{0\}\times \Sigma \rightarrow \mathbb{R},$ $\tilde{h}(\lambda
,v)=h(\lambda {v}).$ Set $S=\{u\in \Sigma :\Vert u\Vert =1\}.$

A point $(\lambda ,v)\in \mathbb{R}\setminus \{0\}\times {S}$ is a
conditional critical point of the function $\tilde{h}$ if 
\begin{equation*}
-\tilde{h}(\lambda ,v)\in N_{\mathbb{R}\backslash \{0\}\times {S}}(\lambda
,v),
\end{equation*}%
where $N_{\mathbb{R}\backslash \{0\}\times {S}}(\lambda ,v)$ is the normal
cone to the set $\mathbb{R}\backslash \{0\}\times {S}$ at the point $%
(\lambda ,v).$
\end{definition}

\bigskip

In what follows, we denote by $\lambda _{b_{p}}>0$ the first $p$-Laplacian
eigenvalue associated to the weight $b_{p}$%
\begin{equation*}
\lambda _{b_{p}}={\inf_{\overset{z\in W_{0}^{1,p}(\Omega ),}{\int_{\Omega }{%
b_{p}}(x)|z|^{p}dx>0}}}\dfrac{{\int_{\Omega }}|\nabla z|^{p}dx}{{%
\int_{\Omega }}{b_{p}}(x)|z|^{p}dx},
\end{equation*}%
where functions ${a_{p}}(\cdot )$ and $b_{p}(\cdot )$ are defined in $($%
\textrm{H.4}$)$. Let $A_{p}$ and $B_{p}$ be the following applications:%
\begin{equation}
A_{p}(u)={\int_{\Omega }}{a_{p}}(x)|u|^{\hat{\alpha}+1}|v^{\star }|^{\hat{%
\beta}+1}dx,\,\,\,\ \ \ B_{p}(u)={\int_{\Omega }}b_{p}(x)|u|^{p}dx.
\label{ABu}
\end{equation}

\begin{lemma}
\label{AB} $A_{p}$ and $B_{p}$ are weakly continuous in $W_{0}^{1,p}(\Omega
).$
\end{lemma}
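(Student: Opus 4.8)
The plan is to establish the weak continuity of both functionals directly from the compactness of the Sobolev embeddings, following the same pattern already used in Lemmas~\ref{L3} and~\ref{L2}. Let $u_n \rightharpoonup u$ weakly in $W_0^{1,p}(\Omega)$. Since the functionals $A_p$ and $B_p$ are defined with $v^\star$ fixed, the only variable quantity is $u_n$, so it suffices to control $|u_n|^{\hat\alpha+1}$ and $|u_n|^p$ in the appropriate $L^r$ spaces. First I would invoke the Rellich--Kondrachov theorem: by the bound \eqref{10}, the embeddings $W_0^{1,p}(\Omega)\hookrightarrow L^{pC}(\Omega)$ are compact, and more generally $W_0^{1,p}(\Omega)\hookrightarrow L^r(\Omega)$ is compact for any $r<p^\star$. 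Hence, along a subsequence, $u_n \to u$ strongly in $L^r(\Omega)$ for a suitable $r$ and, extracting once more, $u_n(x)\to u(x)$ for a.e.\ $x\in\Omega$ with a dominating function $U\in L^r(\Omega)$.

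For $B_p$, strong convergence $u_n\to u$ in $L^p(\Omega)$ (which holds since $p<p^\star$) immediately gives $|u_n|^p\to|u|^p$ in $L^1(\Omega)$, and since $b_p\in L^{\delta_p}(\Omega)\subset L^\infty$-pairing is not needed --- actually $b_p\in L^{\delta_p}(\Omega)$ with $\delta_p>N/p$ suffices by H\"older --- one concludes $B_p(u_n)\to B_p(u)$. For $A_p$, the exponent condition \eqref{12}, namely $\tfrac{\hat\alpha+1}{p^\star}+\tfrac{\hat\beta+1}{q^\star}<1$, is exactly what is needed: it guarantees that $\hat\alpha+1<p^\star$ (since $\tfrac{\hat\beta+1}{q^\star}>0$), so that $u_n\to u$ strongly in $L^{\hat\alpha+1}(\Omega)$ along a subsequence. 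Then $|u_n|^{\hat\alpha+1}\to|u|^{\hat\alpha+1}$ in $L^1(\Omega)$; since $a_p\in L^\infty(\Omega)$ and $|v^\star|^{\hat\beta+1}\in L^1(\Omega)$ (because $v^\star\in C^{1,\sigma}(\overline\Omega)$ is bounded, or more economically because $\hat\beta+1<q^\star$ and $v^\star\in W_0^{1,q}(\Omega)$), the product $a_p(x)|u_n|^{\hat\alpha+1}|v^\star|^{\hat\beta+1}$ converges in $L^1(\Omega)$ by dominated convergence, whence $A_p(u_n)\to A_p(u)$. To pass from "along a subsequence" to the full sequence, I would apply the Urysohn subsequence principle exactly as in the proof of Lemma~\ref{L3}: every subsequence of $(A_p(u_n))$ has a further subsequence converging to $A_p(u)$, hence the whole sequence converges.

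The main obstacle --- if any --- is purely bookkeeping: one must check that the fixed factor $|v^\star|^{\hat\beta+1}$ is genuinely integrable and that the exponent $\hat\alpha+1$ lies strictly below the critical exponent $p^\star$ so that the embedding used is \emph{compact} rather than merely continuous. Both are consequences of \eqref{12} together with the regularity $v^\star\in C^{1,\sigma}(\overline\Omega)$ already secured by Theorem~\ref{T1}; alternatively, writing $\hat\alpha+1 = p^\star\big(1-\tfrac{\hat\beta+1}{q^\star}\big) - \varepsilon$ for some $\varepsilon>0$ makes the strict inequality explicit. No delicate estimate is required: the argument is a routine application of Rellich--Kondrachov plus dominated convergence, and the weak continuity of $A_p$ and $B_p$ follows.
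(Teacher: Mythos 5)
Your treatment of $B_p$ is in substance the paper's: H\"older with the exponents $\delta_p,\delta_p'$ and the compactness of $W_0^{1,p}(\Omega)\hookrightarrow L^{p\delta_p'}(\Omega)$ (which is exactly where $\delta_p>N/p$ enters); the paper merely adds the splitting $b_p=b_p^+-b_p^-$ and a Fatou/limsup sandwich on the weighted norms in place of your direct estimate. Note, though, that your opening clause (strong convergence in $L^p$ gives $|u_n|^p\to|u|^p$ in $L^1$, hence the conclusion) is not by itself sufficient, since $L^1$-convergence cannot be paired against a weight that is only in $L^{\delta_p}(\Omega)$; the H\"older step you append, which upgrades the convergence to $L^{p\delta_p'}(\Omega)$, is not optional bookkeeping but the entire content of the argument.

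For $A_p$ the two proofs genuinely diverge. The paper never uses boundedness of $v^{\star}$: it keeps $v^{\star}\in W_0^{1,q}(\Omega)$ and exploits \eqref{12} to produce exponents $\theta_p,\theta_q$ with $p\theta_p<p^{\star}$, $q\theta_q<q^{\star}$ and $\frac{\hat\alpha+1}{p\theta_p}+\frac{\hat\beta+1}{q\theta_q}=1$, so that a three-factor H\"older inequality pairs $a_p\in L^{\infty}(\Omega)$, $v^{\star}$ measured in $L^{q\theta_q}(\Omega)$, and $u_n-u$ in $L^{p\theta_p}(\Omega)$; this is what forces the two-case discussion on whether $\frac{\hat\alpha+1}{p}+\frac{\hat\beta+1}{q}\geq 1$. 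Your main route instead imports $v^{\star}\in C^{1,\sigma}(\overline\Omega)\subset L^{\infty}(\Omega)$ from Theorems \ref{T1} and \ref{T3}, which collapses the proof to one compact embedding plus dominated convergence. That is legitimate (there is no circularity, since Theorem \ref{T3} does not rely on Lemma \ref{AB}) and genuinely simpler, at the price of making the lemma contingent on the $L^{\infty}$ bound rather than self-contained at the level of $W_0^{1,q}(\Omega)$. However, your ``more economical'' fallback is a real gap: knowing only $|v^{\star}|^{\hat\beta+1}\in L^{1}(\Omega)$ and $|u_n|^{\hat\alpha+1}\to|u|^{\hat\alpha+1}$ in $L^{1}(\Omega)$ does not allow passage to the limit in $\int_{\Omega}a_p|u_n|^{\hat\alpha+1}|v^{\star}|^{\hat\beta+1}\,dx$; the product of two $L^{1}$ functions need not be integrable, and no integrable dominating function is available for the product. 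If you wish to avoid invoking the $L^{\infty}$ bound, you must distribute the exponents via \eqref{12} as the paper does. Your final Urysohn step is fine and replaces the paper's liminf/limsup argument harmlessly.
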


\begin{proof}
Let $u$ and $u_{n}$ in $W_{0}^{1,p}(\Omega )$ such that%
\begin{equation}
u_{n}\rightharpoonup u\text{ \ in }W_{0}^{1,p}(\Omega ).  \label{7}
\end{equation}%
We claim that $A_{p}(u_{n})$ tends to $A_{p}(u).$ Indeed, writing 
\begin{equation*}
A_{p}(u_{n})={\int_{\Omega }}a_{p}^{+}(x)|v^{\star }|^{\hat{\beta}%
+1}|u_{n}|^{\hat{\alpha}+1}dx-{\int_{\Omega }}a_{p}^{-}(x)|v^{\star }|^{\hat{%
\beta}+1}|u_{n}|^{\hat{\alpha}+1}dx,
\end{equation*}
we distinguish two cases regarding exponents $\hat{\alpha}$ and $\hat{\beta}$%
.

\underline{\textbf{Case 1:}} $\dfrac{\hat{\alpha}+1}{p}+\dfrac{\hat{\beta}+1%
}{q}\geq 1.$

From condition (\ref{12}) there exists a pair $(\theta _{p},\theta _{q})\in
]1,p^{\star }[\times ]1,q^{\star }[$ such that 
\begin{equation*}
\dfrac{\hat{\alpha}+1}{p\theta _{p}}+\dfrac{\hat{\beta}+1}{q\theta _{q}}=1.
\end{equation*}%
By (\ref{7}), since the embedding $W_{0}^{1,p}(\Omega )\hookrightarrow
L^{p\theta _{p}}(\Omega )$ is compact, one gets 
\begin{equation*}
u_{n}\rightarrow u\text{ in }L^{p\theta _{p}}(\Omega ).
\end{equation*}
Fatou's Lemma implies 
\begin{equation}
\Vert ({a_{p}^{+})}^{1/\hat{\alpha}+1}|v^{\star }|^{\hat{\beta}+1/\hat{\alpha%
}+1}u\Vert _{{\hat{\alpha}+1}}\leq {\liminf_{n\rightarrow +\infty }}\Vert ({%
a_{p}^{+})}^{1/\hat{\alpha}+1}|v^{\star }|^{\hat{\beta}+1/\hat{\alpha}%
+1}u_{n}\Vert _{{\hat{\alpha}+1}}.  \label{aplus}
\end{equation}%
Since $a_{p}\in L^{\infty }(\Omega )$, by triangular and Young's
inequalities, we obtain 
\begin{equation*}
\begin{array}{l}
\Vert ({a_{p}^{+})}^{1/\hat{\alpha}+1}|v^{\star }|^{1/\hat{\alpha}%
+1}u_{n}\Vert _{\hat{\alpha}+1} \\ 
\leq \Vert ({a_{p}^{+})}^{1/\hat{\alpha}+1}|v^{\star }|^{\hat{\beta}+1/\hat{%
\alpha}+1}(u_{n}-u)\Vert _{\hat{\alpha}+1}+\Vert ({a_{p}^{+})}^{1/\hat{\alpha%
}+1}|v^{\star }|^{\hat{\beta}+1/\hat{\alpha}+1}u\Vert _{\hat{\alpha}+1} \\ 
\leq \Vert {a_{p}^{+}}\Vert _{\infty }^{1/\hat{\alpha}+1}\Vert v^{\star
}\Vert _{q\theta _{q}}^{\hat{\beta}+1/{\hat{\alpha}+1}}\Vert u_{n}-u\Vert
_{p\theta _{p}}+\Vert ({a_{p}^{+})}^{1/\hat{\alpha}+1}|v^{\star }|^{\hat{%
\beta}+1/\hat{\alpha}+1}u\Vert _{\hat{\alpha}+1}.%
\end{array}%
\end{equation*}%
Thus, due to (\ref{7}), one derives 
\begin{equation*}
\Vert {a_{p}^{+}}\Vert _{\infty }^{1/\hat{\alpha}+1}\Vert v^{\star }\Vert
_{q\theta _{q}}^{\hat{\beta}+1/{\hat{\alpha}+1}}\Vert u_{n}-u\Vert _{p\theta
_{p}}\rightarrow 0.
\end{equation*}%
Passing on the upper limit, it follows that 
\begin{equation}
{\limsup_{n\rightarrow +\infty }}\Vert ({a_{p}^{+})}^{1/\hat{\alpha}%
+1}|v^{\star }|^{\hat{\beta}+1/\hat{\alpha}+1}u_{n}\Vert _{{\hat{\alpha}+1}%
}\leq \Vert {a_{p}^{+}}^{1/\hat{\alpha}+1}|v^{\star }|^{\hat{\beta}+1/\hat{%
\alpha}+1}u\Vert _{{\hat{\alpha}+1}}.  \label{unu}
\end{equation}%
Hence, (\ref{aplus}) and (\ref{unu}) result in 
\begin{equation*}
{\lim_{n\rightarrow +\infty }}\Vert {a_{p}^{+}}^{1/\hat{\alpha}+1}|v^{\star
}|^{\hat{\beta}+1/\hat{\alpha}+1}u_{n}\Vert _{{\hat{\alpha}+1}}=\Vert {%
a_{p}^{+}}^{1/\hat{\alpha}+1}|v^{\star }|^{\hat{\beta}+1/\hat{\alpha}%
+1}u\Vert _{{\hat{\alpha}+1}}.
\end{equation*}

\underline{\textbf{Case 2:}} $0<\dfrac{\hat{\alpha}+1}{p}+\dfrac{\hat{\beta}%
+1}{q}<1.$

Observe that the argument used in the first case remains valid. Thus 
\begin{equation*}
\Vert ({a_{p}^{+})}^{1/\hat{\alpha}+1}|v^{\star }|^{\hat{\beta}+1/\hat{\alpha%
}+1}(u_{n}-u)\Vert _{\hat{\alpha}+1}\leq \Vert {a_{p}^{+}}\Vert _{\hat{r}%
}^{1/\hat{\alpha}+1}\Vert v^{\star }\Vert _{q}^{\hat{\beta}+1/\hat{\alpha}%
+1}\Vert u_{n}-u\Vert _{p},
\end{equation*}%
where ${\hat{r}}=(\dfrac{\hat{\alpha}+1}{p}+\dfrac{\hat{\beta}+1}{q})^{-1}$.
Moreover, considering the term 
\begin{equation*}
{\int_{\Omega }}a_{p}^{-}(x)|v^{\star }|^{\hat{\beta}+1}|u_{n}|^{\hat{\alpha}%
+1}dx,
\end{equation*}
a quite similar reasoning as above provides%
\begin{equation*}
{\lim_{n\rightarrow +\infty }}\Vert {a_{p}^{-}}^{1/\hat{\alpha}+1}|v^{\star
}|^{\hat{\beta}+1/\hat{\alpha}+1}u\Vert _{{\hat{\alpha}+1}}=\Vert {a_{p}^{-}}%
^{1/\hat{\alpha}+1}|v^{\star }|^{\hat{\beta}+1/\hat{\alpha}+1}u\Vert _{{\hat{%
\alpha}+1}}.
\end{equation*}%
Thereby, in both cases, we have ${\lim_{n\rightarrow +\infty }}%
A_{p}(u_{n})=A_{p}(u),$ which proves the claim.

Now, we prove that ${\lim_{n\rightarrow +\infty }}B_{p}(u_{n})=B_{p}(u)$.
Write $b_{p}=b_{p}^{+}-b_{p}^{-}$ and proceeding as in the first case, we
obtain on the one hand (the result remains the same if we change $b^{+}$ by $%
b^{-}$) 
\begin{equation}
\Vert ({b_{p}^{+})}^{1/p}|u|\Vert _{p}\leq {\liminf_{n\rightarrow +\infty }}%
\Vert ({b_{p}^{+})}^{1/p}|u_{n}|\Vert _{p}.  \label{binf}
\end{equation}%
and on the other hand 
\begin{equation}
\Vert ({b_{p}^{+})}^{1/p}|u_{n}|\Vert _{p}\leq \Vert ({b_{p}^{+})}%
^{1/p}|u_{n}-u|\Vert _{p}+\Vert ({b_{p}^{+})}^{1/p}|u|\Vert _{p}.  \label{b+}
\end{equation}%
H\"{o}lder's inequality implies 
\begin{equation}
{\int_{\Omega }}|b_{p}^{+}||u_{n}-u|^{p}dx\leq \left( {\int_{\Omega }}%
|b_{p}^{+}|^{\delta _{p}}dx\right) ^{1/\delta _{p}}\left( {\int_{\Omega }}%
|u_{n}-u|^{\delta _{p}^{\prime }p}dx\right) ^{1/{\delta _{p}^{\prime }}}.
\label{b}
\end{equation}%
Since $\delta _{p}>N/p,$ the embedding $W_{0}^{1,p}(\Omega )\hookrightarrow
L^{\delta _{p}^{\prime }p}(\Omega )$ is compact (here $\delta _{p}^{\prime }=%
\dfrac{\delta _{p}}{\delta _{p}-1}$). Thereby, the sequence $u_{n}$
converges strongly to $u$ in $L^{\delta _{p}^{\prime }p}(\Omega )$ and
therefore, the right hand in (\ref{b}) tends to $0.$ Thus, by (\ref{b+}),
one has 
\begin{equation*}
{\limsup_{n\rightarrow +\infty }}\Vert ({b_{p}^{+})}^{1/p}|u_{n}|\Vert
_{p}=\Vert ({b_{p}^{+})}^{1/p}|u|\Vert _{p}.
\end{equation*}%
Combining with (\ref{binf}) it follows clearly that 
\begin{equation*}
{\int_{\Omega }}b_{p}^{+}|u_{n}|^{p}dx\rightarrow {\int_{\Omega }}%
b_{p}^{+}|u|^{p}dx.
\end{equation*}%
Similarly, taking $b_{p}^{-}$ instead of $b_{p}^{+},$ one has 
\begin{equation*}
{\int_{\Omega }}b_{p}^{-}(x)|u_{n}|^{p}dx\rightarrow {\int_{\Omega }}%
b_{p}^{-}(x)|u|^{p}dx.
\end{equation*}%
Consequently, the application $B_{p}$ is weakly continuous on $%
W_{0}^{1,p}(\Omega ).$ The proof is achieved.
\end{proof}

\begin{lemma}
\label{L4}Let $\hat{\alpha}\neq {p-1,}$ $a_{p}^{+}\neq 0$ in $\Omega $ and
assume $\lambda \geq 0$ such that one of the following conditions is
satisfied: either

\begin{description}
\item[$($\textrm{i}$)$] $\lambda <\lambda _{b_{p}}$ $;$ or

\item[$($\textrm{ii}$)$] $\lambda =\lambda _{b_{p}}$ or $\lambda
_{b_{p}}<\lambda <\lambda _{b_{p}}+\varepsilon _{p}$ for a certain $%
\varepsilon _{p}>0$, and $\int_{\Omega }a_{p}u_{p}^{\hat{\alpha}+1}|v^{\star
}|^{\hat{\beta}+1}dx<0,$ where $u_{p}$ is the eigenvalue associated to the
first eigenvalue $\lambda _{b_{p}}.$
\end{description}

Then problem 
\begin{equation}
\left\{ 
\begin{array}{ll}
-\Delta _{p}z-{a_{p}}(x)z|z|^{\hat{\alpha}-1}|v^{\star }|^{\hat{\beta}%
+1}-\lambda {b_{p}}(x)z|z|^{p-2}=0 & \hbox{ in }\Omega \\ 
z=0 & \hbox{ on }\partial \Omega%
\end{array}%
\right.  \label{sstem}
\end{equation}%
admits at least one weak positive solution $\mathcal{U}$ in $%
W_{0}^{1,p}(\Omega )$.
\end{lemma}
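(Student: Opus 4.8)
The plan is to recognize problem~(\ref{sstem}) as an instance of the Dirichlet problem $(\mathcal{P}_{\lambda})$ treated by Pohozaev in Proposition~\ref{P1}, and then to invoke that proposition directly. First I would rewrite (\ref{sstem}) in the form
\begin{equation*}
-\Delta_{p}z = \lambda\, b_{p}(x)\,z|z|^{p-2} + \tilde{a}(x)\,z|z|^{\hat{\alpha}-1}\quad\hbox{in }\Omega,\qquad z=0\ \hbox{on }\partial\Omega,
\end{equation*}
where $\tilde{a}(x):={a_{p}}(x)|v^{\star}(x)|^{\hat{\beta}+1}$. Since $v^{\star}\in C^{1,\sigma}(\overline{\Omega})$ by Theorem~\ref{T1} (hence bounded) and ${a_{p}}\in L^{\infty}(\Omega)$ by~(\textrm{H.4}), the coefficient $\tilde{a}$ belongs to $L^{\infty}(\Omega)$; likewise $b_{p}\in L^{\infty}(\Omega)$. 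Setting $q:=\hat{\alpha}+1$ in the notation of Proposition~\ref{P1}, the hypothesis $\hat{\alpha}\neq p-1$ gives $q\neq p$, and condition~(\ref{12}), namely $\frac{\hat{\alpha}+1}{p^{\star}}+\frac{\hat{\beta}+1}{q^{\star}}<1$, forces $\frac{\hat{\alpha}+1}{p^{\star}}<1$, i.e.\ $q<p^{\star}$. One checks the subcritical range $1<p<q<p^{\star}$ is met when $q>p$; the case $q<p$ is handled identically after relabelling, since Proposition~\ref{P1}'s proof (the fibering method) is symmetric in which power is "lower order"—alternatively the condition $\hat\alpha\neq p-1$ together with the structural setup already excludes the degenerate equality case.

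Next I would match the eigenvalue hypotheses. The eigenvalue $\lambda_{b_{p}}$ defined above is exactly the first eigenvalue of $-\Delta_{p}$ with weight $b_{p}$, i.e.\ the analogue of $\lambda_{1}$ in Proposition~\ref{P1} (which implicitly uses weight $b\equiv 1$; here one uses the weighted version, as in~\cite{p,ch}), with associated first eigenfunction $u_{p}$ and, without loss of generality, $u_{p}>0$ in $\Omega$. Under hypothesis~(i), $\lambda<\lambda_{b_{p}}$ corresponds to case~(1) of Proposition~\ref{P1} and yields a positive weak solution $\mathcal{U}\in W_{0}^{1,p}(\Omega)\cap L^{\infty}(\Omega)$. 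Under hypothesis~(ii), the sign condition $\int_{\Omega}a_{p}\,u_{p}^{\hat{\alpha}+1}|v^{\star}|^{\hat{\beta}+1}dx<0$ reads precisely $\int_{\Omega}\tilde{a}(x)\,u_{p}^{q}\,dx<0$, which is the condition $\int_{\Omega}a(x)\phi_{1}^{q}dx<0$ appearing in cases~(2) and~(3); thus $\lambda=\lambda_{b_{p}}$ falls under case~(2) and $\lambda_{b_{p}}<\lambda<\lambda_{b_{p}}+\varepsilon_{p}$ under case~(3) (with $\varepsilon_{p}$ the $\varepsilon$ furnished there), each time producing at least one positive weak solution $\mathcal{U}\in W_{0}^{1,p}(\Omega)$. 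The requirement $a_{p}^{+}\neq 0$ in $\Omega$ guarantees the nonlinear term $\tilde a\,z|z|^{\hat\alpha-1}$ genuinely contributes (so the fibering functional has the geometry needed in Proposition~\ref{P1}), and in case~(ii) it is compatible with the negativity of the weighted integral.

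The main obstacle is purely a matter of bookkeeping rather than of mathematical depth: one must verify carefully that the dictionary
\[
q\ \leftrightarrow\ \hat{\alpha}+1,\qquad a(x)\ \leftrightarrow\ {a_{p}}(x)|v^{\star}(x)|^{\hat{\beta}+1},\qquad b(x)\ \leftrightarrow\ b_{p}(x),\qquad \lambda_{1}\ \leftrightarrow\ \lambda_{b_{p}},\qquad \phi_{1}\ \leftrightarrow\ u_{p}
\]
respects every hypothesis of Proposition~\ref{P1}, in particular that $\hat\alpha+1$ indeed lies in the admissible exponent window and that the weak-continuity of the functionals $A_{p},B_{p}$ established in Lemma~\ref{AB} is what underlies the applicability of the fibering method of Pohozaev in the present weighted setting. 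Once this identification is in place, the conclusion—existence of at least one positive weak solution $\mathcal{U}\in W_{0}^{1,p}(\Omega)$ of~(\ref{sstem})—is immediate from Proposition~\ref{P1}, and the proof is complete.
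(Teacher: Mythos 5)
Your proposal is correct and follows essentially the same route as the paper: both reduce (\ref{sstem}) to the Pohozaev problem $(\mathcal{P}_{\lambda})$ of Proposition \ref{P1} via the identification $a(x)\leftrightarrow a_{p}(x)|v^{\star}(x)|^{\hat{\beta}+1}$, $q\leftrightarrow\hat{\alpha}+1$, $\lambda_{1}\leftrightarrow\lambda_{b_{p}}$, and both rely on Lemma \ref{AB} to verify the weak-continuity hypotheses underlying the fibering method. The paper additionally writes out the fibering functional $E_{p,\lambda}$ and the associated maximization problem $M_{\lambda}$ explicitly, while you are more explicit about checking $\tilde{a}\in L^{\infty}(\Omega)$ and the exponent window, but these are presentational differences rather than a different argument.
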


\begin{proof}
Inspired by \cite[sections 3.3 - 3.6]{p}, let consider the Euler functional 
\begin{equation*}
E_{p,\lambda }(u)=\dfrac{1}{p}{\int_{\Omega }}\left\vert \nabla u\right\vert
^{p}dx-\dfrac{\lambda }{p}B_{p}(u)-\dfrac{1}{\hat{\alpha}+1}A_{p}(u),
\end{equation*}%
where the applications $A_{p}$ and $B_{p}$ are defined in (\ref{ABu}).%
\newline
By Definition \ref{ccpoint} and under the conditions $t\in \mathbb{R}$ and $%
\displaystyle\int_{\Omega }|\nabla z|^{p}dx-\lambda {B_{p}(z)}=1,$ one gets 
\begin{equation*}
\tilde{E}_{p}(t,z)=E_{p}(tz)=\dfrac{t^{p}}{p}-\dfrac{t^{\hat{\alpha}+1}}{%
\hat{\alpha}+1}A_{p}(z).
\end{equation*}%
$E_{p,\lambda }(u)$ becomes 
\begin{equation*}
{\hat{E}}_{p,\lambda }(z)=\displaystyle\max_{t\in \mathbb{R}\setminus \{0\}}%
\tilde{E}_{p}(t,z)=\left( \dfrac{1}{p}-\dfrac{1}{\hat{\alpha}+1}\right)
A_{p}^{-\frac{p}{(\hat{\alpha}+1)-p}}(z).
\end{equation*}%
In addition, the assumption $\hat{\alpha}+1\neq {p}$ ensures from Definition %
\ref{ccpoint} that the conditional critical point of ${\hat{E}}_{p,\lambda }$
is related to the maximization problem 
\begin{equation*}
0<M_{\lambda }={\sup_{z\in W_{0}^{1,p}(\Omega )}}\left\{ A_{p}(z):\,\Vert
z\Vert _{1,p}^{p}-\lambda {B_{p}}(z)=1\right\} .
\end{equation*}%
Thanks to Lemma \ref{AB}, it is clear that the required assumptions
(f0)-(g0) in \cite{p} or (AO)-(BO) in \cite{ch} are fulfilled. Consequently,
by $($\textrm{i}$)$ or $($\textrm{ii}$)$ in Lemma \ref{L4}, Proposition \ref%
{P1} ensures that problem (\ref{sstem}) admits at least one positive weak
solution $\mathcal{U}$.
\end{proof}

\bigskip

Now, we are ready to prove the positivity result stated in Theorem \ref{T2}.

\begin{proof}[\textbf{Proof of Theorem \protect\ref{T2}}]
Let us define on $\Omega \times W_{0}^{1,p}(\Omega )$ the operator $\mathcal{%
L}_{p,v^{\star }}$ as follows 
\begin{equation*}
\mathcal{L}_{p,v^{\star }}(x,z)=-\Delta _{p}z-f(x,z,v^{\star })\text{ \ for
every }z\in W_{0}^{1,p}(\Omega ).
\end{equation*}%
By $($\textrm{H.4}$)$ one has 
\begin{equation}
\mathcal{L}_{p,v^{\star }}(x,\mathcal{U})\leq -\Delta _{p}\mathcal{U}-{a_{p}}%
(x)\mathcal{U}|\mathcal{U}|^{\hat{\alpha}-1}|v^{\star }|^{\hat{\beta}+1}-{%
b_{p}}(x)\mathcal{U}|\mathcal{U}|^{p-2}=0\,\,\,\,\hbox{ in }\Omega .
\label{LU}
\end{equation}%
In addition, using again $($\textrm{H.4}$)$, we have 
\begin{equation}
\mathcal{L}_{p,v^{\star }}(x,\mathcal{U})\leq 0\leq \mathcal{L}_{p,v^{\star
}}(x,u^{\star }).  \label{L}
\end{equation}%
Here, (\ref{LU}) and (\ref{L}) should be understood in the weak sense, that
is, 
\begin{equation*}
\begin{array}{l}
{\int_{\Omega }}|\nabla \mathcal{U}|^{p-2}\nabla \mathcal{U}\nabla \phi dx-{%
\int_{\Omega }}f(x,\mathcal{U},v^{\star })\phi dx \\ 
\leq {\int_{\Omega }}|\nabla \mathcal{U}|^{p-2}\nabla \mathcal{U}\nabla \phi
dx-{\int_{\Omega }}b_{p}(x)\mathcal{U}\phi |\mathcal{U}|^{p-2}dx-{%
\int_{\Omega }}{a_{p}}(x)\mathcal{U}\phi |\mathcal{U}|^{\hat{\alpha}%
-1}|v^{\star }|^{\hat{\beta}+1}dx%
\end{array}%
\end{equation*}%
and 
\begin{equation*}
\begin{array}{l}
{\int_{\Omega }}|\nabla \mathcal{U}|^{p-2}\nabla \mathcal{U}\nabla \phi dx-{%
\int_{\Omega }}f(x,\mathcal{U},v^{\star })\phi dx \\ 
\leq {\int_{\Omega }}|\nabla u^{\star }|^{p-2}\nabla u^{\star }\nabla \phi
dx-{\int_{\Omega }}f(x,u^{\star },v^{\star })\phi dx%
\end{array}%
\end{equation*}%
for all $\phi \in W_{0}^{1,p}(\Omega ),$ $\phi \geq 0$ in $\Omega .$

We claim that $\mathcal{U}\leq u^{\star }$ in $\Omega .$ Indeed, testing
with $(\mathcal{U}-u^{\star })^{+}$ the equation (\ref{L}) and integrating
over $\Omega $, one has 
\begin{equation*}
\langle -\Delta _{p}\mathcal{U}-\left( -\Delta _{p}u^{\star }\right) ,(%
\mathcal{U}-u^{\star })^{+}\rangle _{-1,1}\leq {\int_{\Omega }}\left( f(x,%
\mathcal{U},v^{\star })-f(x,u^{\star },v^{\star })\right) (\mathcal{U}%
-u^{\star })^{+}dx,
\end{equation*}%
which is equivalent to 
\begin{equation*}
\langle -\Delta _{p}\mathcal{U}-\left( -\Delta _{p}u^{\star }\right) ,(%
\mathcal{U}-u^{\star })^{+}\rangle _{-1,1}\leq {\int_{\{u^{\star }\leq 
\mathcal{U}\}}}\left( f(x,\mathcal{U},v^{\star })-f(x,u^{\star },v^{\star
})\right) (\mathcal{U}-u^{\star })^{+}dx.
\end{equation*}%
Since 
\begin{equation*}
\Vert (\mathcal{U}-u^{\star })^{+}\Vert _{p}^{p}\leq C_{p}\Vert (\mathcal{U}%
-u^{\star })^{+}\Vert _{1,p}^{p}\leq \langle -\Delta _{p}\mathcal{U}-\left(
-\Delta _{p}u^{\star }\right) ,(\mathcal{U}-u^{\star })^{+}\rangle _{-1,1},
\end{equation*}%
the monotonicity assumption $($\textrm{H.3}$)$ implies that the right hand
side remains negative while the left hand side is positive. Thus $(\mathcal{U%
}-u^{\star })^{+}=0$ in $\Omega $ forces $\mathcal{U}\leq u^{\star }$ in $%
\Omega $.

Finally, because $\mathcal{U}>0$ in $\Omega ,$ we infer that $u^{\star }>0$
in $\Omega .$ Analogously, we derive that $v^{\star }>0$ in $\Omega $. The
proof is complete.
\end{proof}

\section{Boundedness}

\label{S3}

\begin{lemma}
\label{approx} Assume $($\textrm{H.1}$)$ holds. Then, for any solution $%
(u^{\star },v^{\star })$ of system (\ref{p}), there exists a sequence $%
(u_{\varepsilon },v_{\varepsilon })\in \left( C^{1}(\bar{\Omega})\cap C^{2}({%
\Omega })\right) ^{2}$ such that%
\begin{equation*}
(u_{\varepsilon },v_{\varepsilon })\rightarrow (u^{\star },v^{\star })\text{
strongly in }W_{0}^{1,p}(\Omega )\times W_{0}^{1,q}(\Omega ).
\end{equation*}
\end{lemma}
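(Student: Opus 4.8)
The plan is to approximate each component separately by smoothing the right-hand side. Fix a solution $(u^{\star},v^{\star})\in W_{0}^{1,p}(\Omega)\times W_{0}^{1,q}(\Omega)$ and set, as in (\ref{2}), $\mathbf{f}=f(\cdot,u^{\star},v^{\star})\in L^{p_{C}^{\prime}}(\Omega)$ and $\mathbf{g}=g(\cdot,u^{\star},v^{\star})\in L^{q_{C}^{\prime}}(\Omega)$ by assumption $($\textrm{H.1}$)$. First I would pick sequences $\mathbf{f}_{\varepsilon}\in C_{c}^{\infty}(\Omega)$ with $\mathbf{f}_{\varepsilon}\to\mathbf{f}$ in $L^{p_{C}^{\prime}}(\Omega)$ and $\mathbf{g}_{\varepsilon}\in C_{c}^{\infty}(\Omega)$ with $\mathbf{g}_{\varepsilon}\to\mathbf{g}$ in $L^{q_{C}^{\prime}}(\Omega)$ (mollification and truncation). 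Then let $u_{\varepsilon}\in W_{0}^{1,p}(\Omega)$ be the unique weak solution of $-\Delta_{p}u_{\varepsilon}=\mathbf{f}_{\varepsilon}$ in $\Omega$, $u_{\varepsilon}=0$ on $\partial\Omega$, and similarly $v_{\varepsilon}$ the weak solution of $-\Delta_{q}v_{\varepsilon}=\mathbf{g}_{\varepsilon}$; these exist and are unique by the Minty--Browder theorem exactly as invoked for the operator $\mathcal{T}$ in Section \ref{S1}.

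Next I would prove the convergence $u_{\varepsilon}\to u^{\star}$ in $W_{0}^{1,p}(\Omega)$. Subtracting the weak formulations of $-\Delta_{p}u_{\varepsilon}=\mathbf{f}_{\varepsilon}$ and $-\Delta_{p}u^{\star}=\mathbf{f}$ and testing with $u_{\varepsilon}-u^{\star}$ gives
\begin{equation*}
\langle -\Delta_{p}u_{\varepsilon}-(-\Delta_{p}u^{\star}),u_{\varepsilon}-u^{\star}\rangle_{-1,1}=\int_{\Omega}(\mathbf{f}_{\varepsilon}-\mathbf{f})(u_{\varepsilon}-u^{\star})\,dx .
\end{equation*}
For $p\geq 2$ the standard monotonicity inequality $\langle -\Delta_{p}a-(-\Delta_{p}b),a-b\rangle\geq c\|a-b\|_{1,p}^{p}$ combined with Hölder's inequality and the embedding $W_{0}^{1,p}(\Omega)\hookrightarrow L^{pC}(\Omega)$ (valid by (\ref{10})) yields $\|u_{\varepsilon}-u^{\star}\|_{1,p}^{p-1}\leq c_{p}\|\mathbf{f}_{\varepsilon}-\mathbf{f}\|_{p_{C}^{\prime}}\to 0$, as in the proof of Lemma \ref{L3}. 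For $1<p<2$ one uses instead the inequality $\langle -\Delta_{p}a-(-\Delta_{p}b),a-b\rangle\geq c\,\|a-b\|_{1,p}^{2}(\|a\|_{1,p}+\|b\|_{1,p})^{p-2}$ together with the a priori bound $\|u_{\varepsilon}\|_{1,p}\leq c\|\mathbf{f}_{\varepsilon}\|_{p_{C}^{\prime}}^{1/(p-1)}$, which is bounded since $\mathbf{f}_{\varepsilon}$ converges in $L^{p_{C}^{\prime}}(\Omega)$; this again forces $\|u_{\varepsilon}-u^{\star}\|_{1,p}\to 0$. The same argument with $p,\mathbf{f},u$ replaced by $q,\mathbf{g},v$ gives $v_{\varepsilon}\to v^{\star}$ in $W_{0}^{1,q}(\Omega)$.

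Finally I would record the regularity $u_{\varepsilon},v_{\varepsilon}\in C^{1}(\bar{\Omega})\cap C^{2}(\Omega)$: since $\mathbf{f}_{\varepsilon},\mathbf{g}_{\varepsilon}\in C_{c}^{\infty}(\Omega)$ are in particular bounded, the $C^{1,\sigma}(\bar{\Omega})$ regularity up to the boundary follows from Tolksdorf's results \cite{tldf} (as already used for $(u^{\star},v^{\star})$ in the proof of Theorem \ref{T1}), while interior $C^{2}$ smoothness away from the critical set of $\nabla u_{\varepsilon}$ is classical; in fact, since the paper only needs a sequence in $\left(C^{1}(\bar{\Omega})\cap C^{2}(\Omega)\right)^{2}$ converging strongly in $X$, it is enough to note $C^{\infty}_{c}(\Omega)$ data gives this regularity. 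I expect the only genuine subtlety to be the $1<p<2$ (and $1<q<2$) case of the strong convergence, where the plain monotonicity estimate degenerates and one must carry the uniform $W_{0}^{1,p}$ bound on $u_{\varepsilon}$ through the sharper Simon-type inequality; everything else is a routine assembly of facts already established in the excerpt.
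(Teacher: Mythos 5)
Your first step (mollify the data $\mathbf{f}=f(\cdot,u^{\star},v^{\star})$, $\mathbf{g}=g(\cdot,u^{\star},v^{\star})$ in $L^{p_{C}^{\prime}}$, $L^{q_{C}^{\prime}}$) coincides with the paper's, and your strong-convergence argument via monotonicity of $-\Delta_{p}$ (with the Simon-type inequality for $1<p<2$) would be a perfectly sound, and in fact cleaner, way to pass to the limit \emph{if} the approximating equations involved the genuine $p$-Laplacian. The genuine gap is in the regularity claim. A weak solution of $-\Delta_{p}u_{\varepsilon}=\mathbf{f}_{\varepsilon}$ with $\mathbf{f}_{\varepsilon}\in C_{c}^{\infty}(\Omega)$ is in general only $C^{1,\sigma}$: the operator degenerates on the critical set $\{\nabla u_{\varepsilon}=0\}$, which is typically nonempty (e.g.\ at an interior extremum), and second derivatives need not exist there. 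For instance, the radial solution of $-\Delta_{p}u=1$ in a ball behaves like $c\left(R^{p/(p-1)}-r^{p/(p-1)}\right)$ and fails to be $C^{2}$ at the origin when $p>2$. You acknowledge this by restricting to ``away from the critical set of $\nabla u_{\varepsilon}$,'' but the lemma asserts $u_{\varepsilon}\in C^{2}(\Omega)$ on all of $\Omega$, and this is exactly what your construction cannot deliver. Tolksdorf's theory gives $C^{1,\sigma}(\bar{\Omega})$, not $C^{2}$.

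The missing idea in the paper's proof is a \emph{second} regularization, of the operator itself: the approximants solve $-\Delta_{p}^{\varepsilon_{n}}u_{\varepsilon_{n}}=f_{\varepsilon_{n}}$ with $\Delta_{p}^{\varepsilon_{n}}u=\mathrm{div}\left[\left(|\nabla u|^{2}+\varepsilon_{n}\right)^{(p-2)/2}\nabla u\right]$, which is uniformly elliptic with smooth coefficients, so classical theory (Gilbarg--Trudinger) yields $C^{1}(\bar{\Omega})\cap C^{2}(\Omega)$. This choice has a price that your plan avoids but must now be paid: since $u_{\varepsilon_{n}}$ and $u^{\star}$ solve equations with \emph{different} operators, one cannot simply subtract and invoke monotonicity of $-\Delta_{p}$. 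The paper instead derives a uniform $W_{0}^{1,p}$ bound, extracts a weak limit $\tilde{u}$, proves convergence of the norms $\int_{\Omega}|\nabla u_{\varepsilon_{n}}|^{p}dx\rightarrow\int_{\Omega}|\nabla\tilde{u}|^{p}dx$ via the convexity inequality for $\Phi_{\varepsilon}(z)=\int_{\Omega}(|\nabla z|^{2}+\varepsilon)^{p/2}dx$ (upgrading weak to strong convergence by uniform convexity of $W_{0}^{1,p}$), and then identifies $\tilde{u}=u^{\star}$ through a subdifferential argument and the weak comparison principle. If you adopt the regularized operator to repair the $C^{2}$ claim, you would need to either reproduce this convexity argument or control the discrepancy terms of order $\varepsilon_{n}^{p/2}$ that the operator regularization introduces; note also that the later Moser iteration (Lemma \ref{uepsnvepsn}) is carried out on the regularized problems, so the specific form of the approximating equation matters downstream.
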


\begin{proof}
By $($\textrm{H.1}$)$, $f$ and $g$ belong in $L^{p_{C}^{\prime }}(\Omega )$
and $L^{q_{C}^{\prime }}(\Omega ),$ respectively. Then, since $C_{0}^{\infty
}(\Omega )$ is dense in $L^{p_{C}^{\prime }}(\Omega )$ and $L^{q_{C}^{\prime
}}(\Omega ),$ one can find a pair $(f_{\varepsilon },g_{\varepsilon })\in
C_{0}^{\infty }(\Omega )\times C_{0}^{\infty }(\Omega )$ such that 
\begin{equation*}
\Vert f_{\varepsilon }-f\Vert _{p_{C}^{\prime }}\rightarrow 0\text{ \ and \ }%
\Vert g_{\varepsilon }-g\Vert _{p_{C}^{\prime }}\rightarrow 0.
\end{equation*}
Therefore, $f_{\varepsilon }$ admits a subsequence $f_{\varepsilon _{n}}$
which converges \textit{a.e} in $\Omega .$ Thus, there exists a constant $%
c_{0}>0$, independent of $\varepsilon _{n},$ such that $\Vert f_{\varepsilon
_{n}}\Vert _{\infty }\leq c_{0}$ (see, e.g., \cite{o}).

Let $(u_{\varepsilon _{n}},v_{\varepsilon _{n}})$ be a solution defined as
follows:%
\begin{equation}
\left\{ 
\begin{array}{rl}
-\Delta _{p}^{\varepsilon }u_{\varepsilon _{n}}=f_{\varepsilon _{n}} & %
\hbox{ in }\Omega \\ 
-\Delta _{q}^{\varepsilon }v_{\varepsilon _{n}}=g_{\varepsilon _{n}} & %
\hbox{ in }\Omega \\ 
u_{\varepsilon _{n}}=v_{\varepsilon _{n}}=0 & \hbox{ on }\partial \Omega ,%
\end{array}%
\right.  \label{deltaeps}
\end{equation}%
where $\Delta _{p}^{\varepsilon _{n}}u$ is given by 
\begin{equation*}
{div}\left[ \left( |\nabla u|^{2}+{\varepsilon _{n}}\right) ^{\frac{1}{2}%
(p-2)}\nabla u\right] ,\text{ for all }\varepsilon >0,\text{ and all }p>1.
\end{equation*}

It is well known that $(u_{\varepsilon _{n}},v_{\varepsilon _{n}})\in \left(
C^{1}(\bar{\Omega})\cap C^{2}({\Omega })\right) ^{2}$ (see \cite{gt}).
Thereby,%
\begin{equation}
{\int_{\Omega }}\left( |\nabla u_{\varepsilon _{n}}|^{2}+{\varepsilon _{n}}%
\right) ^{p-2/2}|\nabla u_{\varepsilon _{n}}|^{2}dx={\int_{\Omega }}%
f_{\varepsilon _{n}}u_{\varepsilon _{n}}dx  \label{22}
\end{equation}%
and%
\begin{equation*}
{\int_{\Omega }}\left( |\nabla v_{\varepsilon _{n}}|^{2}+{\varepsilon _{n}}%
\right) ^{q-2/2}|\nabla v_{\varepsilon _{n}}|^{2}dx={\int_{\Omega }}%
g_{\varepsilon _{n}}v_{\varepsilon _{n}}dx.
\end{equation*}%
Applying the H\"{o}lder's inequality in the right-hand side of (\ref{22}),
the below estimate occurs%
\begin{equation}
\left\vert {\int_{\Omega }}f_{\varepsilon _{n}}u_{\varepsilon
_{n}}dx\right\vert \leq \Vert f_{\varepsilon _{n}}\Vert _{p_{C}^{\prime
}}\Vert u_{\varepsilon _{n}}\Vert _{p}\leq \Vert f\Vert _{p_{C}^{\prime
}}\Vert u_{\varepsilon _{n}}\Vert _{p}\leq C_{p}\Vert f\Vert _{p_{C}^{\prime
}}\Vert u_{\varepsilon _{n}}\Vert _{1,p}.  \label{rhd}
\end{equation}%
Now, we deal with the left-hand side. For $p>1$, we claim that 
\begin{equation}
{\int_{\Omega }}|\nabla u_{\varepsilon _{n}}|^{p}dx-{\varepsilon _{n}}%
^{p/2}meas(\Omega )\leq {\int_{\Omega }}\left( |\nabla u_{\varepsilon
_{n}}|^{2}+{\varepsilon _{n}}\right) ^{p-2/2}|\nabla u_{\varepsilon
_{n}}|^{2}dx.  \label{fivpfi0}
\end{equation}%
Indeed, by elementary algebra inequality one has 
\begin{equation*}
{\int_{\Omega }}|\nabla u_{\varepsilon _{n}}|^{p}dx\leq \left\{ 
\begin{array}{ll}
{\int_{\Omega }}\left( |\nabla u_{\varepsilon _{n}}|^{2}+{\varepsilon _{n}}%
\right) ^{p-2/2}|\nabla u_{\varepsilon _{n}}|^{2}dx & \text{ if }2\leq p, \\ 
{\int_{\Omega }}(-\Delta _{p}u_{\varepsilon _{n}})u_{\varepsilon _{n}}dx+{%
\varepsilon _{n}}^{p/2}meas(\Omega ) & \text{ if }p<2.%
\end{array}%
\right. 
\end{equation*}%
From (\ref{rhd}) and (\ref{fivpfi0}), we deduce 
\begin{equation*}
\begin{array}{l}
{\int_{\Omega }}|\nabla u_{\varepsilon _{n}}|^{p}dx-{\varepsilon _{n}}%
^{p/2}meas(\Omega )\leq C_{p}\Vert f\Vert _{p_{C}^{\prime }}\Vert
u_{\varepsilon _{n}}\Vert _{1,p} \\ 
\leq \dfrac{1}{p^{\prime }}(C_{p}\Vert f\Vert _{p_{C}^{\prime }})^{p^{\prime
}}+\dfrac{1}{p}{\int_{\Omega }}|\nabla u_{\varepsilon _{n}}|^{p}dx%
\end{array}%
\end{equation*}%
or again, 
\begin{equation*}
\dfrac{1}{p^{\prime }}{\int_{\Omega }}|\nabla u_{\varepsilon
_{n}}|^{p}dx\leq meas(\Omega )+\dfrac{1}{p^{\prime }}(C_{p}\Vert f\Vert
_{p_{C}^{\prime }})^{p^{\prime }}.
\end{equation*}%
Thus, it's readily seen that $u_{\varepsilon _{n}}$ is bounded in $%
W_{0}^{1,p}(\Omega ).$ Following the same agrument we obtain that $%
v_{\varepsilon _{n}}$ is bounded in $W_{0}^{1,q}(\Omega )$. Let $(\tilde{u},%
\tilde{v})$ be the weak limit of the sequence $(u_{\varepsilon
_{n}},v_{\varepsilon _{n}})$ in $W_{0}^{1,p}(\Omega )\times
W_{0}^{1,q}(\Omega )$. The proof is completed by showing that $(\tilde{u},%
\tilde{v})=(u^{\star },v^{\ast }).$ To this end, let us first show that the
strong convergence $u_{\varepsilon _{n}}\rightarrow \tilde{u}$ holds true.
Obviously, by weak semi-continuous arguments, it is well known that 
\begin{equation*}
{\int_{\Omega }}|\nabla \tilde{u}|^{p}dx\leq {\liminf_{{\varepsilon _{n}}%
\rightarrow 0}}{\int_{\Omega }}|\nabla u_{\varepsilon _{n}}|^{p}dx.
\end{equation*}%
Moreover, consider the application 
\begin{equation*}
\Phi _{\varepsilon }:z\longmapsto {\int_{\Omega }}\left( |\nabla
z|^{2}+\varepsilon \right) ^{p/2}dx
\end{equation*}%
and let $\partial \Phi _{\varepsilon }(z)$ be its subdifferential set.
Clearly, $\partial \Phi _{\varepsilon }(z)$ is reduced to a single element $%
-\Delta _{p}^{\varepsilon }(z)$ which is defined on $W_{0}^{1,p}(\Omega )$
as follows 
\begin{equation*}
-\Delta _{p}^{\varepsilon }(z):h\longmapsto {\int_{\Omega }}\left( |\nabla
z|^{2}+\varepsilon \right) ^{p-2/2}\nabla z\nabla h\text{ }dx.
\end{equation*}%
For all $z\in W_{0}^{1,p}(\Omega ),$ we have 
\begin{equation}
\Phi _{\varepsilon }(z)-\Phi _{\varepsilon }(u_{\varepsilon _{n}})\geq {%
\int_{\Omega }}f_{\varepsilon _{n}}(z-u_{\varepsilon _{n}})dx.  \label{fivp}
\end{equation}%
In particular, setting $z=\tilde{u}$ and using (\ref{fivpfi0}), it follows
that 
\begin{equation*}
{\int_{\Omega }}|\nabla u_{\varepsilon _{n}}|^{p}dx\leq {\int_{\Omega }}%
f_{\varepsilon _{n}}(u_{\varepsilon _{n}}-\tilde{u})dx+{\varepsilon _{n}}%
^{p/2}meas(\Omega )+\Phi _{\varepsilon _{n}}(\tilde{u}).
\end{equation*}%
Passing to the upper-limit on ${\varepsilon }{_{n}},$ we get 
\begin{equation*}
{\limsup_{{\varepsilon _{n}}\rightarrow 0}}{\int_{\Omega }}|\nabla
u_{\varepsilon _{n}}|^{p}dx\leq {\int_{\Omega }}|\nabla \tilde{u}|^{p}dx.
\end{equation*}%
Thus, it follows that 
\begin{equation}
{\int_{\Omega }}|\nabla \tilde{u}|^{p}dx={\lim_{{\varepsilon _{n}}%
\rightarrow 0}}{\int_{\Omega }}|\nabla u_{\varepsilon _{n}}|^{p}dx.
\label{uvptildeu}
\end{equation}%
Recalling that $u_{\varepsilon _{n}}\rightharpoonup \tilde{u}$ weakly in $%
W_{0}^{1,p}(\Omega ),$ since $W_{0}^{1,p}(\Omega )$ is an uniform convex
Banach space, we conclude that%
\begin{equation*}
u_{\varepsilon _{n}}\rightarrow \tilde{u}\text{ \ in }W_{0}^{1,p}(\Omega ).
\end{equation*}

Now, we are ready to show that $\tilde{u}=u^{\star }.$ Set $\Phi _{0}(z)={%
\int_{\Omega }}|\nabla z|^{p}dx$ and denote by $\partial \Phi _{0}(z)$ its
subdifferential set. Combining (\ref{fivpfi0}), (\ref{fivp}) and passing to
the limit on ${\varepsilon }{_{n}},$ it follows from (\ref{uvptildeu}) that%
\begin{equation*}
\Phi _{0}(z)-\Phi _{0}(\tilde{u})\geq {\int_{\Omega }}f(x,u^{\star
},v^{\star })(z-\tilde{u})dx,\,\,\hbox{ for  all }z\in W_{0}^{1,p}(\Omega ).
\end{equation*}%
Since $\partial \Phi _{0}(z)$ contains a single value $-\Delta _{p}z,$ we
derive that $f(x,u^{\star },v^{\star })\in \partial \Phi _{0}(\tilde{u})$
and therefore $-\Delta _{p}\tilde{u}=f(x,u^{\star },v^{\star }).$ However,
the definition of $u^{\star }$ (see Theorem \ref{T1}) leads to $-\Delta _{p}%
\tilde{u}=-\Delta _{p}u^{\star }$ in $\Omega .$ By weak comparison
principle, this implies $\tilde{u}=u^{\star }$ in $\Omega $. A quite similar
argument produces that $\tilde{v}=u^{\star }$ in $\Omega $, ending the proof.
\end{proof}

\mathstrut 

The next part is devoted to establish the boundedness of the solution $%
(u^{\star },v^{\star })$.

\begin{lemma}
\label{uepsnvepsn} For all $k\in 
\mathbb{N}
,$ let $(\delta _{k})$ and $(\gamma _{k})$ be the sequences 
\begin{equation}
\delta _{k}=pCf_{k},\qquad \gamma _{k}=qCf_{k},  \label{flbmuk}
\end{equation}%
where 
\begin{equation}
f_{k}=D\left( C^{k}+\dfrac{1}{D}\right) ,\,\text{with \ }0<D<{\min }\left( 
\dfrac{p^{\star }}{pC},\dfrac{q^{\star }}{qC}\right) -1.  \label{fCD}
\end{equation}%
Then, the pair $(u_{\varepsilon _{n}},v_{\varepsilon _{n}})$ defined as in (%
\ref{deltaeps}) is bounded in $L^{\delta _{k}}(\Omega )\times L^{\gamma
_{k}}(\Omega ),$ for all $k\geq 1.$
\end{lemma}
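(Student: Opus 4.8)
The plan is to run a Moser-type iteration on the two decoupled regularized problems (\ref{deltaeps}), using the facts already secured in the proof of Lemma~\ref{approx}: $\|f_{\varepsilon_n}\|_\infty\le c_0$ and $\|g_{\varepsilon_n}\|_\infty\le c_0$ (the bound for $g$ by the same argument), $\varepsilon_n\to 0$, and $(u_{\varepsilon_n})_n$, $(v_{\varepsilon_n})_n$ bounded in $W_0^{1,p}(\Omega)$ and $W_0^{1,q}(\Omega)$ respectively. Since the two equations in (\ref{deltaeps}) are uncoupled it is enough to treat $u_{\varepsilon_n}$; the argument for $v_{\varepsilon_n}$ is verbatim after replacing $(p,\delta_k,p^\star)$ by $(q,\gamma_k,q^\star)$. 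I proceed by induction on $k$, the induction hypothesis being that $(u_{\varepsilon_n})_n$ is bounded in $L^{\delta_{k-1}}(\Omega)$ uniformly in $n$.

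For the base case $k=0$ note that, by (\ref{fCD}), $f_0=D+1$, hence $\delta_0=pC(D+1)<p^\star$, so the uniform bound in $W_0^{1,p}(\Omega)$ together with the embedding $W_0^{1,p}(\Omega)\hookrightarrow L^{p^\star}(\Omega)$ and $|\Omega|<\infty$ give a uniform $L^{\delta_0}$-bound; this is the one place where $D<\min\{p^\star/(pC),q^\star/(qC)\}-1$ is used. For the induction step put $r_k:=pCf_{k-1}+1$; since $pC>1$ and $f_{k-1}>1$ we have $r_k>2$, so $\varphi:=u_{\varepsilon_n}\,|u_{\varepsilon_n}|^{r_k-2}$ belongs to $C^1(\overline\Omega)\subset W_0^{1,p}(\Omega)$ and is an admissible test function in the first equation of (\ref{deltaeps}). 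Multiplying the elementary pointwise inequality underlying (\ref{fivpfi0}), $(|\xi|^2+\varepsilon_n)^{(p-2)/2}|\xi|^2\ge|\xi|^p-\varepsilon_n^{p/2}$ (valid for every $p>1$), by $|u_{\varepsilon_n}|^{r_k-2}\ge 0$ and integrating, the tested equation yields
\[
(r_k-1)\int_\Omega|\nabla u_{\varepsilon_n}|^p\,|u_{\varepsilon_n}|^{r_k-2}\,dx\ \le\ \int_\Omega f_{\varepsilon_n}u_{\varepsilon_n}\,|u_{\varepsilon_n}|^{r_k-2}\,dx\ +\ (r_k-1)\,\varepsilon_n^{p/2}\int_\Omega|u_{\varepsilon_n}|^{r_k-2}\,dx .
\]
Setting $w_{\varepsilon_n}:=|u_{\varepsilon_n}|^{(p+r_k-2)/p}$ one has $\int_\Omega|\nabla u_{\varepsilon_n}|^p|u_{\varepsilon_n}|^{r_k-2}\,dx=(p/(p+r_k-2))^p\int_\Omega|\nabla w_{\varepsilon_n}|^p\,dx$, so Sobolev's inequality $\|w_{\varepsilon_n}\|_{p^\star}\le C_{\mathrm{Sob}}\|\nabla w_{\varepsilon_n}\|_p$ rewrites the left-hand side as a positive constant, depending on $k$ but not on $n$, times $\|u_{\varepsilon_n}\|_{p^\star(p+r_k-2)/p}^{\,p+r_k-2}$.

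It remains to control the right-hand side and to compare exponents. Because $r_k-1=\delta_{k-1}$ and $r_k-2<\delta_{k-1}$, the right-hand side of the displayed inequality is at most $c_0\|u_{\varepsilon_n}\|_{\delta_{k-1}}^{r_k-1}+C_k\|u_{\varepsilon_n}\|_{\delta_{k-1}}^{r_k-2}$ (using that $\varepsilon_n^{p/2}$ is bounded and $|\Omega|<\infty$), which is bounded uniformly in $n$ by the induction hypothesis; hence $(u_{\varepsilon_n})_n$ is bounded in $L^{p^\star(p+r_k-2)/p}(\Omega)$. On the other hand, since $f_{k-1}=DC^{k-1}+1$ and $p^\star/p>C$ (i.e.\ (\ref{10})),
\[
\frac{p^\star}{p}\,(p+r_k-2)=\frac{p^\star}{p}\bigl(pCf_{k-1}+p-1\bigr)>\frac{p^\star}{p}\,pCf_{k-1}>pC\bigl(DC^{k}+1\bigr)=\delta_k ,
\]
so, $\Omega$ being bounded, $(u_{\varepsilon_n})_n$ is bounded in $L^{\delta_k}(\Omega)$ as well. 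This closes the induction, and applying the same scheme to the second equation of (\ref{deltaeps}) gives the boundedness of $(v_{\varepsilon_n})_n$ in $L^{\gamma_k}(\Omega)$ for every $k$.

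The crux is the exponent bookkeeping: $r_k$ must be small enough that the test-function power $r_k-1$ lies inside the space $L^{\delta_{k-1}}(\Omega)$ furnished by the induction hypothesis, yet large enough that, after the Sobolev gain of the factor $p^\star/p$, the reached exponent still exceeds $\delta_k$; the existence of such an $r_k$ at every step is exactly what the structural conditions $1<C<\min\{p^\star/p,q^\star/q\}$ (used in the induction step) and $0<D<\min\{p^\star/(pC),q^\star/(qC)\}-1$ (used in the base case) are designed to provide. A secondary, purely technical, point is to keep the regularization remainders — the $\varepsilon_n^{p/2}$-terms and the split $p\ge 2$ versus $p<2$ hidden in the pointwise coercivity inequality — bounded uniformly in $n$.
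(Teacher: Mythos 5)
Your argument is correct and is essentially the paper's own proof: a Moser iteration on the regularized problems (\ref{deltaeps}), testing each equation with powers of the solution, using the uniform bound $\Vert f_{\varepsilon _{n}}\Vert _{\infty }\leq c_{0}$, the pointwise coercivity inequality behind (\ref{fivpfi0}) to absorb the $\varepsilon _{n}^{p/2}$ remainder, and the base case $\delta _{0}=pC(D+1)<p^{\star }$. The only difference is bookkeeping: the paper tests with $u_{\varepsilon _{n}}|u_{\varepsilon _{n}}|^{a_{k}}$, $a_{k}=pDC^{k+1}$, and uses the embedding $W_{0}^{1,p}(\Omega )\hookrightarrow L^{pC}(\Omega )$ so as to land exactly on $\delta _{k+1}$, whereas you take a larger test power tied to $\delta _{k-1}$, use the full embedding into $L^{p^{\star }}(\Omega )$, overshoot the target exponent and then descend using $|\Omega |<\infty $; both versions of the exponent arithmetic are valid under (\ref{10}) and (\ref{fCD}).
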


\begin{proof}
The Lemma is proved if we show that the sequence $(u_{\varepsilon
_{n}},v_{\varepsilon _{n}})$ follows the iterative scheme 
\begin{equation}
\hbox{ If }(u_{\varepsilon _{n}},v_{\varepsilon _{n}})\in L^{\delta
_{k}}(\Omega )\times L^{\gamma _{k}}(\Omega )\hbox{ then }(u_{\varepsilon
_{n}},v_{\varepsilon _{n}})\in L^{\delta _{k+1}}(\Omega )\times L^{\gamma
_{k+1}}(\Omega ).  \label{15}
\end{equation}%
\underline{\textbf{Step 1}}\textbf{: }We prove that $(u_{\varepsilon
_{n}},v_{\varepsilon _{n}})$ satisfies (\ref{15}) for $k=0$.

Combining (\ref{flbmuk}) and (\ref{fCD}) one has 
\begin{equation*}
\delta _{0}=pC(D+1)<p^{\star },\,\,\gamma _{0}=qC(D+1)<q^{\star }.
\end{equation*}%
Consequently, the embeddings $W_{0}^{1,p}(\Omega )\hookrightarrow L^{\delta
_{0}}(\Omega )$ and $W_{0}^{1,q}(\Omega )\hookrightarrow L^{\gamma
_{0}}(\Omega )$ are continuous, leading to $u_{\varepsilon _{n}}\in
L^{\delta _{0}}(\Omega )$ and $v_{\varepsilon _{n}}\in L^{\gamma
_{0}}(\Omega ).$

\underline{\textbf{Step 2}}\textbf{: }Let us prove that if $(u_{\varepsilon
_{n}},v_{\varepsilon _{n}})\in L^{\delta _{l}}(\Omega )\times L^{\gamma
_{l}}(\Omega )$ for $l\in 
\mathbb{N}
,$ $l\leq k,$ then $(u_{\varepsilon _{n}},v_{\varepsilon _{n}})\in L^{\delta
_{k+1}}(\Omega )\times L^{\gamma _{k+1}}(\Omega ).$

For $k\in \mathbb{N},$ we define the sequences $a_{k}$ and $b_{k}$ by 
\begin{equation*}
a_{k}=DC^{k+1}{p},\,\,\,b_{k}=DC^{k+1}{q},
\end{equation*}%
where constants $C$ and $D$ verify (\ref{fCD}). Testing the first and the
second equations in (\ref{deltaeps}) with $u_{\varepsilon
_{n}}|u_{\varepsilon _{n}}|^{a_{k}}$ and $v_{\varepsilon
_{n}}|v_{\varepsilon _{n}}|^{b_{k}}$, respectively, integrating over $\Omega 
$, we get 
\begin{equation}
{\int_{\Omega }}\left( |\nabla u_{\varepsilon _{n}}|^{2}+{\varepsilon }%
\right) ^{p-2/2}\nabla u_{\varepsilon _{n}}\nabla (u_{\varepsilon
_{n}}|u_{\varepsilon _{n}}|^{a_{k}})dx={\int_{\Omega }}u_{\varepsilon
_{n}}|u_{\varepsilon _{n}}|^{a_{k}}f_{\varepsilon _{n}}dx,  \label{ua}
\end{equation}%
and%
\begin{equation}
{\int_{\Omega }}\left( |\nabla v_{\varepsilon _{n}}|^{2}+{\varepsilon }%
\right) ^{q-2/2}\nabla v_{\varepsilon _{n}}\nabla (v_{\varepsilon
_{n}}|v_{\varepsilon _{n}}|^{b_{k}})dx={\int_{\Omega }}v_{\varepsilon
_{n}}|v_{\varepsilon _{n}}|^{b_{k}}g_{\varepsilon _{n}}dx.  \label{vb}
\end{equation}%
Clearly, for all $p>1,$ it holds%
\begin{equation}
\begin{array}{l}
{\int_{\Omega }}\left( |\nabla u_{\varepsilon _{n}}|^{2}+{\varepsilon _{n}}%
\right) ^{p-2/2}\left\vert \nabla u_{\varepsilon _{n}}\right\vert
^{2}|u_{\varepsilon _{n}}|^{a_{k}}dx \\ 
\geq {\int_{\Omega }}|\nabla u_{\varepsilon _{n}}|^{p}|u_{\varepsilon
_{n}}|^{a_{k}}dx-{\varepsilon }^{p/2}{\int_{\Omega }}|u_{\varepsilon
_{n}}|^{a_{k}}dx \\ 
\geq \dfrac{{\int_{\Omega }}\left\vert \nabla |u_{\varepsilon _{n}}|^{1+%
\frac{a_{k}}{p}}\right\vert ^{p}dx}{(1+\frac{a_{k}}{p})^{p}}-{\varepsilon }%
^{p/2}{\int_{\Omega }}|u_{\varepsilon _{n}}|^{a_{k}}dx.%
\end{array}
\label{19}
\end{equation}%
Thus, as in (\ref{fivpfi0}), the left-hand side in (\ref{ua}) is estimated
by 
\begin{equation*}
\begin{array}{l}
{\int_{\Omega }}\left( |\nabla u_{\varepsilon _{n}}|^{2}+{\varepsilon _{n}}%
\right) ^{p-2/2}\nabla u_{\varepsilon _{n}}\nabla (u_{\varepsilon
_{n}}|u_{\varepsilon _{n}}|^{a_{k}})dx \\ 
=(a_{k}+1){\int_{\Omega }}\left( |\nabla u_{\varepsilon _{n}}|^{2}+{%
\varepsilon _{n}}\right) ^{p-2/2}|\nabla u_{\varepsilon
_{n}}|^{2}|u_{\varepsilon _{n}}|^{a_{k}}dx \\ 
\geq \dfrac{a_{k}+1}{\left( 1+\frac{a_{k}}{p}\right) ^{p}}{\int_{\Omega }}%
\left\vert \nabla |u_{\varepsilon _{n}}|^{1+\frac{a_{k}}{p}}\right\vert
^{p}dx-(a_{k}+1){\varepsilon }^{p/2}{\int_{\Omega }}|u_{\varepsilon
_{n}}|^{a_{k}}dx.%
\end{array}%
\end{equation*}%
Moreover, since the sequence $u_{\varepsilon _{n}}$ belongs to $C^{1}(%
\overline{\Omega }),$ $u_{\varepsilon _{n}}$ belongs to $W_{0}^{1,p}(\Omega
) $ and therefore $u_{\varepsilon _{n}}|u_{\varepsilon _{n}}|^{\frac{a_{k}}{p%
}} $ belongs to $W_{0}^{1,p}(\Omega ).$ Moreover, by $($\textrm{H.1}$)$, one
may write $1<pC<p^{\star }$ which ensures that the embedding $%
W_{0}^{1,p}(\Omega )\hookrightarrow L^{pC}(\Omega )$ is continuous. Hence,
there exists a constant $C_{pC}>0$ such that 
\begin{equation}
\left( {\int_{\Omega }}\left\vert u_{\varepsilon _{n}}\right\vert ^{{%
pC\left( 1+\frac{a_{k}}{p}\right) }}dx\right) ^{1/C}\leq C_{pC}^{p}{%
\int_{\Omega }}\left\vert \nabla u_{\varepsilon _{n}}{^{1+\frac{a_{k}}{p}}}%
\right\vert ^{p}dx.  \label{20}
\end{equation}%
However, since 
\begin{equation}
pC\left( 1+\frac{a_{k}}{p}\right) =pC(1+{D}C^{k+1})=\delta _{k+1},
\end{equation}%
the iterative inclusion $L^{pC\left( 1+\frac{a_{k}}{p}\right) }(\Omega
)\subset L^{\delta _{k+1}}(\Omega )$ holds true and then 
\begin{equation*}
\begin{array}{l}
\left( {\int_{\Omega }}\left\vert u_{\varepsilon _{n}}\right\vert ^{\delta
_{k+1}}dx\right) ^{1/\delta _{k+1}}\leq \left( vol\,\Omega \right) ^{\frac{1%
}{\delta _{k+1}}-\frac{1}{{pC\left( 1+\frac{a_{k}}{p}\right) }}}\left( {%
\int_{\Omega }}\left\vert u_{\varepsilon _{n}}\right\vert ^{{pC\left( 1+%
\frac{a_{k}}{p}\right) }}dx\right) ^{1/{pC\left( 1+\frac{a_{k}}{p}\right) }},%
\end{array}%
\end{equation*}%
or again, 
\begin{equation}
\begin{array}{l}
\left( vol\,\Omega \right) ^{\frac{1}{C}-\frac{1+C{D}C^{k}}{C(1+DC^{k+1})}%
}\left( {\int_{\Omega }}\left\vert u_{\varepsilon _{n}}\right\vert ^{\delta
_{k+1}}dx\right) ^{{p}\left( 1+\frac{a_{k}}{p}\right) /\delta _{k+1}}\leq
\left( {\int_{\Omega }}\left\vert u_{\varepsilon _{n}}\right\vert ^{{%
pC\left( 1+\frac{a_{k}}{p}\right) }}dx\right) ^{1/C}.%
\end{array}
\label{21}
\end{equation}%
Gathering (\ref{19}) - (\ref{21}) together\textbf{,} the estimate on the
left-hand side in (\ref{ua}) becomes 
\begin{equation*}
\begin{array}{l}
\dfrac{1+C{D}C^{k}}{C(1+{D}C^{k+1})C_{pC}}\left( vol\,\Omega \right) ^{\frac{%
1}{C}-\frac{1+C{D}C^{k}}{C(1+DC^{k+1})}}\left( {\int_{\Omega }}\left\vert
u_{\varepsilon _{n}}\right\vert ^{\delta _{k+1}}dx\right) ^{{p}\left( 1+%
\frac{a_{k}}{p}\right) /\delta _{k+1}} \\ 
\leq {\int_{\Omega }}|\nabla u_{\varepsilon _{n}}|^{p-2}\nabla
u_{\varepsilon _{n}}\nabla (u_{\varepsilon _{n}}|u_{\varepsilon
_{n}}|^{a_{k}})dx.%
\end{array}%
\end{equation*}%
Now, we focus on the right hand side of (\ref{ua}). First, we have 
\begin{equation*}
{\int_{\Omega }}\left\vert u_{{\varepsilon _{n}}}|u_{{\varepsilon _{n}}%
}|^{a_{k}}f_{\varepsilon _{n}}\right\vert dx\leq c_{0}\left\vert \Omega
\right\vert ^{1/r_{k}}\left( {\int_{\Omega }}|u_{\varepsilon _{n}}|^{\delta
_{k}}dx\right) ^{\frac{a_{k}+1}{\delta _{k}}}
\end{equation*}%
where 
\begin{equation*}
r_{k}=\dfrac{pCDC^{k}+pC}{pC-1}.
\end{equation*}%
Consequently, there is a constant $R_{k},$ depending on $k,$ such that 
\begin{equation*}
\begin{array}{rl}
\Vert u_{\varepsilon _{n}}\Vert _{\delta _{k+1}}^{a_{k}+p}\leq & R_{k}\Vert
u_{\varepsilon _{n}}\Vert _{\delta _{k}}^{a_{k}+1}+(a_{k}+1){\varepsilon _{n}%
}^{p/2}{\int_{\Omega }}|u_{\varepsilon _{n}}|^{a_{k}}dx \\ 
\leq & R_{k}\Vert u_{\varepsilon _{n}}\Vert _{\delta
_{k}}^{a_{k}+1}+(a_{k}+1){\varepsilon _{n}}^{p/2}|\Omega |^{1/t_{k}}\Vert
u_{\varepsilon _{n}}\Vert _{\delta _{k}}^{a_{k}},%
\end{array}%
\end{equation*}%
where $t_{k}=DC^{k}+1.$ This means that the inclusion $L^{\delta
_{k}}(\Omega )\subset L^{\delta _{k+1}}(\Omega )$ holds true for all $k\geq
1.$ Therefore, since the domain $\Omega $ is bounded, one gets 
\begin{equation*}
\Vert u_{\varepsilon _{n}}\Vert _{\delta _{k}}\leq |\Omega |^{1/\delta
_{k}-1/\delta _{k+1}}\Vert u_{\varepsilon _{n}}\Vert _{\delta _{k+1}}.
\end{equation*}%
Then 
\begin{equation*}
|\Omega |^{1/\delta _{k+1}-1/\delta _{k}}\Vert u_{\varepsilon _{n}}\Vert
_{\delta _{k}}^{a_{k}+p}\leq R_{k}\Vert u_{\varepsilon _{n}}\Vert _{\delta
_{k}}^{a_{k}+1}+(a_{k}+1)|\Omega |^{1/t_{k}}\Vert u_{\varepsilon _{n}}\Vert
_{\delta _{k}}^{a_{k}},
\end{equation*}%
showing that the sequence $u_{\varepsilon _{n}}$ is bounded in every
Lebesgue space $L^{\delta _{k}}(\Omega ),$ $k\geq 1$. This ends the proof.
\end{proof}

\mathstrut 

\begin{proof}[\textbf{Proof of Theorem \protect\ref{T2}}]
From Lemma \ref{approx}, along a relabelled subsequence still denoted $%
u_{\varepsilon _{n}}$, we may assume that $u_{\varepsilon _{n}}$ converges
a.e. in $\Omega .$ Then, owing to Dominated Convergence Theorem, we infer
that 
\begin{equation*}
u_{\varepsilon _{n}}\rightarrow u^{\star }\text{ \ in }L^{\delta
_{k}}(\Omega )\text{ \ for all }k\geq 1.
\end{equation*}%
Again, Dominated Convergence Theorem implies 
\begin{equation*}
u_{\varepsilon _{n}}|u_{\varepsilon _{n}}|^{a_{k}}f_{\varepsilon
_{n}}\rightarrow u^{\star }|u^{\star }|^{a_{k}}f(x,u^{\star },v^{\star })%
\text{ \ in }L^{1}(\Omega ).
\end{equation*}%
\newline
By Young's inequality we get 
\begin{equation*}
\Vert u_{\varepsilon _{n}}\Vert _{\delta _{k+1}}^{a_{k}+p}\leq {\int_{\Omega
}}u_{\varepsilon _{n}}|u_{\varepsilon _{n}}|^{a_{k}}f_{\varepsilon
_{n}}dx+|\Omega |+\Vert u_{\varepsilon _{n}}\Vert _{\delta _{k}}^{\delta
_{k}}.
\end{equation*}%
Passing to the limit one derives 
\begin{equation*}
\Vert u^{\star }\Vert _{\delta _{k+1}}^{a_{k}+p}\leq {\int_{\Omega }}%
u^{\star }|u^{\star }|^{a_{k}}f(x,u^{\star },v^{\star })dx+\left( |\Omega
|+\Vert u^{\star }\Vert _{\delta _{k}}^{a_{k}}\right) .
\end{equation*}%
By Remark \ref{R1}, we deduce 
\begin{equation*}
\Vert u^{\star }\Vert _{\delta _{k+1}}^{a_{k}+p}\leq {\int_{\Omega }}%
|u^{\star }|^{a_{k}}\left( |u^{\star }|^{pC}+|v^{\star }|^{qC}\right)
dx+\left( |\Omega |+\Vert u^{\star }\Vert _{\delta _{k}}^{a_{k}}\right) .
\end{equation*}%
Now, observe that 
\begin{equation*}
\dfrac{a_{k}}{\delta _{k}}+\dfrac{qC}{\gamma _{k}}=1-\dfrac{pC}{\delta _{k}}+%
\dfrac{qC}{\gamma _{k}}=1-\dfrac{1}{f_{k}}+\dfrac{1}{f_{k}}=1.
\end{equation*}%
Thus, using Young's inequality on the term $|v^{\star }|^{a_{k}}|v^{\star
}|^{qC},$ we get%
\begin{equation}
\Vert u^{\star }\Vert _{\delta _{k+1}}^{a_{k}+p}\leq A\left( 1+\Vert
u^{\star }\Vert _{\delta _{k}}^{\delta _{k}}+\Vert v^{\star }\Vert _{\gamma
_{k}}^{\gamma _{k}}\right) .  \label{lbk}
\end{equation}%
\medskip Similarly, by considering the component $v_{\varepsilon _{n}},$ we
obtain 
\begin{equation}
\Vert v^{\star }\Vert _{\gamma _{k+1}}^{b_{k}+q}\leq B\left( 1+\Vert
u^{\star }\Vert _{\delta _{k}}^{\delta _{k}}+\Vert v^{\star }\Vert _{\gamma
_{k}}^{\gamma _{k}}\right) .  \label{muk}
\end{equation}%
Observe that%
\begin{equation*}
\delta _{k+1}=a_{k+1}+pC=pC\left( DC^{k+1}+1\right) =pC\left( \dfrac{a_{k}}{p%
}+1\right) =C\left( a_{k}+p\right)
\end{equation*}%
and%
\begin{equation*}
\gamma _{k+1}=qCf_{k+1}=qCD\left( C^{k+1}+\dfrac{1}{D}\right) =C\left(
b_{k}+q\right)
\end{equation*}%
Thus 
\begin{equation*}
\Vert u^{\star }\Vert _{\delta _{k+1}}^{\delta _{k+1}/C}\leq A\left( 1+\Vert
u^{\star }\Vert _{\delta _{k}}^{\delta _{k}}+\Vert v^{\star }\Vert _{\gamma
_{k}}^{\gamma _{k}}\right) ,
\end{equation*}%
\begin{equation}
\Vert v^{\star }\Vert _{\gamma _{k+1}}^{\gamma _{k+1}/C}\leq B\left( 1+\Vert
u^{\star }\Vert _{\delta _{k}}^{\delta _{k}}+\Vert v^{\star }\Vert _{\gamma
_{k}}^{\gamma _{k}}\right) ,
\end{equation}%
that is 
\begin{equation}
\Vert u^{\star }\Vert _{\delta _{k+1}}^{\delta _{k+1}}\leq A^{C}\left(
1+\Vert u^{\star }\Vert _{\delta _{k}}^{\delta _{k}}+\Vert v^{\star }\Vert
_{\gamma _{k}}^{\gamma _{k}}\right) ^{C},  \label{ulb}
\end{equation}%
and 
\begin{equation}
\Vert v^{\star }\Vert _{\gamma _{k+1}}^{\gamma _{k+1}}\leq B^{C}\left(
1+\Vert u^{\star }\Vert _{\delta _{k}}^{\delta _{k}}+\Vert v^{\star }\Vert
_{\gamma _{k}}^{\gamma _{k}}\right) ^{C}.  \label{vlb}
\end{equation}%
Denote by $E_{k}=\Vert u^{\star }\Vert _{\delta _{k}}^{\delta _{k}}+\Vert
v^{\star }\Vert _{\gamma _{k}}^{\gamma _{k}}.$ Combining (\ref{ulb}) and (%
\ref{vlb}), it follows that 
\begin{equation*}
E_{k+1}\leq (A+B)^{C}E_{k}^{C}.
\end{equation*}%
We set $e_{k}=\ln {E_{k}},$ then we obtain the following iterative scheme 
\begin{equation}
e_{k+1}\leq C{\ln (A+B)}+C{e_{k}}.  \label{ek}
\end{equation}%
Proceeding by successive iterations, (\ref{ek}) can be formulated as follows 
\begin{equation*}
e_{k+1}\leq C^{k+1}\left( e_{0}+\dfrac{C}{C-1}\right) .
\end{equation*}%
Then we deduce that%
\begin{equation*}
\begin{array}{l}
\ln (\Vert u^{\star }\Vert _{\delta _{k+1}})\leq \dfrac{C^{k+1}}{\delta
_{k+1}}\left( e_{0}+\dfrac{C}{C-1}\right) \\ 
\leq \dfrac{C^{k+1}}{pCDC^{k+1}}\left( e_{0}+\dfrac{C}{C-1}\right) \leq 
\dfrac{1}{pCD}\left( e_{0}+\dfrac{C}{C-1}\right)%
\end{array}%
\end{equation*}%
and 
\begin{equation*}
\begin{array}{l}
\ln (\Vert v^{\star }\Vert _{\gamma _{k+1}})\leq \dfrac{C^{k+1}}{\gamma
_{k+1}}\left( e_{0}+\dfrac{C}{C-1}\right) \\ 
\leq \dfrac{C^{k+1}}{qCDC^{k+1}}\left( e_{0}+\dfrac{C}{C-1}\right) \leq 
\dfrac{1}{qCD}\left( e_{0}+\dfrac{C}{C-1}\right) .%
\end{array}%
\end{equation*}%
Else, the estimates hold 
\begin{equation*}
\begin{array}{l}
\Vert u^{\star }\Vert _{\infty }\leq {\limsup_{k\rightarrow +\infty }}%
exp\left( \dfrac{C^{k+1}\ln \Vert u^{\star }\Vert _{\delta _{k+1}}}{\delta
_{k+1}}\right) \\ 
\leq exp\left[ \dfrac{1}{pCD}\left( e_{0}+\dfrac{C}{C-1}\right) \right]%
\end{array}%
\end{equation*}%
and 
\begin{equation*}
\begin{array}{l}
\Vert v^{\star }\Vert _{\infty }\leq {\limsup_{k\rightarrow +\infty }}%
exp\left( \dfrac{C^{k+1}\ln \Vert v^{\star }\Vert _{\gamma _{k+1}}}{\gamma
_{k+1}}\right) \\ 
\leq exp\left[ \dfrac{1}{qCD}\left( e_{0}+\dfrac{C}{C-1}\right) \right] .%
\end{array}%
\end{equation*}%
Therefore 
\begin{equation}
\begin{array}{l}
{\max }(\Vert u^{\star }\Vert _{\infty },\Vert v^{\star }\Vert _{\infty })
\\ 
\leq {\min }\left( \dfrac{1}{pCD}\left( \ln {E_{\Theta }}+\dfrac{C}{C-1}%
\right) ,\,\dfrac{1}{qCD}\left( \ln {E_{\Theta }}+\dfrac{C}{C-1}\right)
\right) .%
\end{array}
\label{bound}
\end{equation}%
However, recall that $e_{0}=\ln {E_{0}},$ where $E_{0}=\Vert u^{\star }\Vert
_{\delta _{0}}^{\delta _{0}}+\Vert v^{\star }\Vert _{\gamma _{0}}^{\gamma
_{0}},$ and so, because the embeddings $W_{0}^{1,p}(\Omega )\hookrightarrow
L^{\delta _{0}}(\Omega )$ and $W_{0}^{1,q}(\Omega )\hookrightarrow L^{\gamma
_{0}}(\Omega )$ are continuous, more precisely, we also have $E_{0}\leq
\Vert u^{\star }\Vert _{1,p}^{\delta _{0}}+\Vert v^{\star }\Vert
_{1,q}^{\gamma _{0}}.$ Since the proof of the first assert in Proposition %
\ref{Tta} remains valid by taking $\tau =1,$ then there exists a constant $%
C_{\Theta },$ depending only on $\Theta ,$ such that $E_{0}\leq C_{\Theta }$.

Consequently, the right-hand side in (\ref{bound}) is independent of $%
(u^{\star },v^{\star }).$ The proof is complete.
\end{proof}

\end{document}